\documentclass[final, a4paper, 12pt]{article}

\usepackage{amsfonts}
\usepackage{amsmath}
\usepackage{amssymb}
\usepackage{amsthm}

\usepackage{bm}
\usepackage{bbm}
\usepackage[english]{babel}
\usepackage{graphicx}
\usepackage{listings}
\usepackage[mathscr]{euscript}
\usepackage{dsfont}
\usepackage[a4paper,scale=0.75]{geometry}
\usepackage{hyperref}
\usepackage{showkeys}

\newtheorem{definition}{Definition}
\newtheorem{lemma}[definition]{Lemma}
\newtheorem{theorem}[definition]{Theorem}
\newtheorem{corollary}[definition]{Corollary}
\newtheorem{proposition}[definition]{Proposition}

\newtheorem{example}[definition]{Example}
\newtheorem{remark}[definition]{Remark}

\renewcommand{\phi}{\varphi}
\renewcommand{\rho}{{\varrho}} 
\renewcommand{\epsilon}{{\varepsilon}}

\newcommand*{\N}{\ensuremath{\mathbb{N}}}

\newcommand*{\R}{\ensuremath{\mathbb{R}}}
\newcommand*{\C}{\ensuremath{\mathbb{C}}}
\renewcommand{\i}{\mathrm{i}}

\DeclareMathOperator{\diam}{diam}
\DeclareMathOperator{\supp}{supp}
\DeclareMathOperator{\dist}{dist}
\DeclareMathOperator{\Rg}{\mathcal{R}}

\renewcommand{\d}[1]{\,\mathrm{d}#1 \,}
\newcommand{\dS}[1]{\,\mathrm{dS}#1 \,}

\renewcommand{\S}{\mathbb{S}}

\newcommand{\PR}{{P_{\R^3}}}
\renewcommand{\mathcal}[1]{{\mathscr{#1}}}
\newcommand{\ch}{\mathrm{char}}
\newcommand{\conv}{\mathrm{conv}}
\newcommand{\D}{\mathcal{D}}
\newcommand{\E}{\mathcal{E}}

\newcommand{\fProp}{\Pi^{(+)}}
\newcommand{\bProp}{\Pi^{(-)}}
\newcommand{\cone}{\wedge}

\setlength{\unitlength}{1cm}

\begin{document}

\title{On the determination of sets supporting unknown sources for the wave equation from radiated fields}
\author{Armin Lechleiter\thanks{Center for Industrial Mathematics, University of Bremen
; \texttt{lechleiter@math.uni-bremen.de}}}

\maketitle

\begin{abstract}
Given near or far field wave measurements generated by some unknown time- and space-dependent acoustic source, we seek to rapidly determine a domain in space-time, as small as possible, that contains the support of a source radiating these measurements. 
As for any inverse source problem, this task is challenging without further restrictions on the source, particularly due to the infinite-dimensional space of ``silent'' sources radiating zero measurements.
This first causes non-uniqueness, that is, the source in general cannot be determined uniquely, and second prevents the computation of, e.\,g., a largest set that must contain the support of the source.  
To determine small domains containing the support of some source that radiates given measurements, we exploit that solutions to the wave equation propagate along characteristics. 
We further indicate restrictions on the support of a source that allow to theoretically characterize this support uniquely from near or far field measurements. 
\end{abstract}

\section{Introduction}
This paper deals with the rapid extraction of information on the support of a time- and space-dependent source term in the wave equation from near or far field measurements of the wave  radiated (i.\,e., generated) by that source. 
It is well-known that inverse source problems generically are ill-posed and feature non-uniqueness. 
This makes it impossible to, e.\,g., uniquely determine the source from the measurements, see~\cite{Isako1990}. 
It is generally even impossible to state non-trivial upper bounds for the support of any source radiating a given measurement, as adding a non-trivial smooth function with compact support (away from the region or surface where measurements are taken) to a given solution yields a new solution that radiates the same measurement.
This motivates to determine sets that are as small as possible and support sources radiating given measurements. 
We are particularly interested in fast algorithms solving this task, as the resulting bounds can then be used as input for more accurate reconstruction techniques, e.g., to increase the computational efficiency of algorithms for time-domain inverse scattering involving time-dependent parameters. 

To fix our setting, consider scattering of an incident plane wave $v^i(t,x) = \psi(c_0 \, t- \theta \cdot x)$ with direction $\theta$ and profile $\psi \in C^\infty_0(\R)$ from a locally perturbed inhomogeneous medium described by a time-dependent sound speed $c=c(t,x): \, \R \times \R^3 \to \R$ that equals a constant $c=c_0$ outside some bounded subset of $\R \times \R^3$. 
When the incident field hits the perturbation of the background medium described by $c$, there arises a causal scattered field $v$ such that the total wave field $w = v^i+v$ satisfies the homogeneous wave equation $\ddot{w} - c^2 \Delta w =0$ in $\R\times\R^3$. 
The causal scattered field hence solves  
\[
 \ddot{v}  - c^2_0 \Delta v = f  \quad \text{in } \R \times \R^3 
 \qquad 
 \text{for } f = (c^2-c_0^2) \Delta v^i.
\]
If one is interested in numerically identifying the contrast $c^2-c_0^2$ from measurements of $v$, then any set supporting the source $f$ (i.e., containing $\supp(f)$) can be used to set up a discrete parameter space for this searched-for contrast. 
This motivates our interest in fast but not necessarily highly accurate algorithms for this task.

The latter task is particularly crucial for the design of efficient inversion algorithms in dynamic inverse problems for the wave equation, where one seeks to determine time- and space-dependent coefficients from time-dependent measurements. 
Such problems arise for instance when imaging flows via acoustic or electromagnetic waves, or in multi-static ultrasound tomography of moving or deforming objects. 
If one aims to use variational or iterative regularization schemes for parameter identification of evolving quantities, then the rapid computation of domains supporting the coefficient variation allows to reduce the dimension of the corresponding parameter space. 
This underlines the importance of fast but not highly accurate algorithms for this feature-like reconstruction problem as a possible pre-processing tool.

Inverse source problems belong to the core of the research area of inverse problems for differential equations. 
As discussed above, uniqueness for these problems can only hold under a-priori knowledge; typically, point-like sources or supports of sources defined on lower-dimensional manifolds can be uniquely identified, see, e.\,g.,~\cite{Badia2000, Isako1990}.
For time-harmonic waves, the convex scattering support of Kusiak and Sylvester~\cite{Kusia2003,Kusia2005} provides the smallest convex set that supports a source generating a time-harmonic far field measurements. 
(See also~\cite{Hanke2008} for extensions to impedance tomography.)
As the latter concept strongly relies on a unique continuation property that is missing for the wave equation, its application to the time-dependent wave equation seems difficult.  

In the Fourier domain,~\cite{Gries2012} further present an efficient approach to determine supports of sources for the time-harmonic wave equation based on the Radon transform.   
Nevertheless, transforming time-domain measurements to the Fourier domain does not always seem to be the better alternative to tackle inverse source problems, as this makes detection of moving objects impossible, prevents to exploit time-space structure of wave propagation, and requires detector measurements for (essentially) all times. 

Concerning inverse problems for the time-dependent wave equation, there are various theoretic tools to prove unique determination or stability results; exemplarily, we refer to Carleman estimates \cite{Bukhg1981, Beili2012} and the boundary control method \cite{Belis1987, Bingh2008, Oksan2013}, both of which can also be exploited numerically. 
As all these references deal with static coefficients, we also like to highlight several works for time- and space-dependent coefficients or obstacles~\cite{Stefa1989, Eskin2007, Ikeha2012, Kian2016}, all providing either uniqueness or stability results. 

We finally would like to point to a series of papers by Friedlander~\cite{Fried1962, Fried1964, Fried1967, Fried1973} on the wave equation, the far field of its solutions, and related inverse problems. 
Particular important for our results are bounds from~\cite{Fried1973} for the support of a source radiating a given far field measurement.

\textit{Notation: }
The ball of radius $R$ centered at the origin is denoted as $B_R$ and the projection of $U \subset \R\times\R^3$ onto $\R^3$ is $\PR(U) := \{ x \in \R^3: \, \exists t\in\R \text{ such that } (t,x) \in U \} \subset \R^3$.  
For an open set $U \subset \R^n$, $n\in\N$, we denote by $\D(U) = C^\infty_0(U)$ smooth test functions with compact support and by $\mathcal{E}(U) = C^\infty(U)$ smooth test functions in $U$; further,  $\D'(U)$ are distributions and $\mathcal{E}'(U)$ are distributions with compact support in $U$.
We generally extend elements of $\D(U)$ and $\E'(U)$ by zero to all of $\R^n$. 

We refer to~\cite[Chapter 4]{Rudin1991} for the usual topology on $\D(U)$ that makes this vector space a locally convex topological vector space with dual $\D'(U)$. 
We further recall that a linear map $f: \, \D(U) \to \C$ belongs to $\D'(U)$ if for all compact subsets $K \subset U$ there is $n\in\N$ and $C < \infty$ such that $|\langle f, \phi \rangle_{\D'(U) \times \D(U)}| \leq C \| \phi \|_{C^n(K)}$ for all $\phi \in \D(U)$ supported in $K$. 
We equip the vector space $\D'(U)$ with the locally convex weak-$^\ast$ topology induced by $\D(\Omega)$, i.\,e., a sequence $\{ f_n \} \in \D'(U)$ converges to $f \in \D'(U)$ if $\langle f_n-f, \phi \rangle_{\D'(U) \times \D(U)} \to 0$ for all $\phi \in \D(U)$.
Analogous definitions hold for test functions $\D(\S^2) = C^\infty(\S^2)$ and distributions $\D'(\S^2)$ on the two-dimensional unit sphere $\S^2 \subset \R^3$ or for vector-valued distributions $\D'(\R; X)$ or $\E'(\R; X)$ on the real line with values in some locally convex topological space $X$. 


\section{Expanding waves}\label{se:waves}
For a distributional source $f \in \mathcal{E}'(\R \times \R)$ with compact support we consider distributional solutions $v \in \D'(\R\times\R^3)$ to the wave equation 
\begin{equation}\label{eq:waveEqC00}
\ddot{v} - c_0^2 \, \Delta v = f \qquad \text{in } \R \times \R^3.
\end{equation}
Recall that such a solution needs to satisfy 
\begin{equation} \label{eq:waveEqDis}
  \big\langle v , \ \ddot{w} - c_0^2 \Delta w \big\rangle_{\D'(\R\times\R^3) \times \D(\R\times\R^3)} = 0
  \quad \text{for all } w \in \D(\R\times\R^3).
\end{equation}
Additionally, we require $v$ to be a causal solution, that is, $\supp(v) \subset \{ (t,x)\in \R\times\R^3: \, t \geq T_0\}$ for some $T_0=T_0(f) \in \R$.  
Causal solutions for compactly supported sources $f$ can be explicitly represented as a retarded volume potential by convolving the source with the causal fundamental solution $(t,x) \mapsto \delta(t-|x|/c_0) / (4 \pi |x|)$ of the wave equation: 
If $\supp(f) \subset \R \times B_R$ for some $R>0$, then the volume potential 
\begin{equation}\label{eq:rep0}
  v(t,x) 
  = (Vf)(t,x) 
  := \int_{B_R} \frac{f(t-|x-y|/c_0, \, y)}{4\pi |x-y|} \d{y}, \qquad (t,x) \in \R \times \R^3,  
\end{equation}
defines such a causal distributional solution. 
(We usually prefer to write this potential via an integral instead of a convolution.)  
Even if this plays no role in the sequel, we mention that~\cite{Sobol1936} shows that $v$ is the only causal distributional solution to~\eqref{eq:waveEqC00}. 
The representation formula~\eqref{eq:rep0} actually implies that the support of $v$ is even included in the set $\{ (t,x) \in \R \times \R^3: \, \big| c_0 t - |x| \big| \leq C \}$ for some constant $C=C(u) \in \R$.
Any solution to~\eqref{eq:waveEqC00} that satisfies the latter support constraint is called an expanding wave. 
By construction, retarded volume potentials with compactly supported sources always belong to this class of solutions to the wave equation. 
(We merely consider sources with compact support such that our definition is somewhat stronger than the one in~\cite{Fried1973}.)

Before stating further properties of the volume potential, note that for any $\chi \in \mathcal{D}(\R)$, the distribution $v$ defines a distribution $v_\chi = \langle v(\cdot,x),\chi \rangle_{\D'(\R)\times\D(\R)}$ in $\D'(\R^3)$ by 
\begin{equation} \label{eq:aux160}
  \langle v_\chi, \psi \rangle_{\D'(\R^3) \times \D(\R^3)} = 
  \langle v, \chi \psi \rangle_{\D'(\R\times\R^3) \times \D(\R\times\R^3)}
  \quad \text{for all $\psi \in \D(\R^3)$.}
\end{equation}
Analogously, for any $\psi \in \D(\R^3)$, the distribution $v_\psi = \langle v(t,\cdot),\psi \rangle_{\D'(\R^3)\times\D(\R^3)}$ in $\mathcal{D}'(\R)$ is defined by 
\begin{equation}\label{eq:aux168}
  \langle v_\psi, \chi \rangle_{\D'(\R) \times \D(\R)} = 
  \langle v, \chi \psi \rangle_{\D'(\R\times\R^3) \times \D(\R\times\R^3)}
  \quad \text{for all $\chi\in \D(\R)$.}
\end{equation}
For an expanding wave $v$, the representation formula~\eqref{eq:rep0} shows that $\supp(v) \subset \{ (t,x) \in \R\times \R^3: \, C_1 \leq c_0 t - |x| \leq C_2 \}$ for constants $C_{1,2}$ depending on $\supp(f)$. 
This implies that the support of $v_\chi$ and $v_\psi$ is a compact subset even if $\chi\in\E(\R)$ and if $\psi \in \E(\R^3)$, respectively.  
For an expanding wave $v$, both $v_\chi \in \mathcal{E}'(\R^3)$ and $v_\psi \in \mathcal{E}'(\R)$ hence are distributions with compact support for $\chi\in\E(\R)$ and $\psi \in \E(\R^3)$, respectively.  
In particular, the Fourier transform $\hat{v}_\psi$ of $v_\psi$ is well-defined and yields a distribution in the Schwartz class $\mathcal{S}'(\R)$, see~\cite{Rudin1991}, 
\[
  \hat{v}_\psi(k)
  := \frac{1}{\sqrt{2\pi}} \langle v_\psi, e^{-\i k \, \cdot} \rangle_{\mathcal{S}'(\R) \times \mathcal{S}(\R)} 
  \left( = \frac{1}{\sqrt{2\pi}} \int_{\R} e^{-\i kt} v_\psi(t) \d{k} \text{ if } v_\psi \in L^1(\R) \right).
\] 
Note that $\hat{v}_\psi(k)$ is analytic in $k$ as $v_\psi$ has compact support (see, again,~\cite{Rudin1991}), such that the latter equation is well-defined for fixed $k\in\R$. 
As the Fourier transform of a convolution of distributions is the product of the convolved distributions, we use~\eqref{eq:rep0},~\eqref{eq:aux160} and~\eqref{eq:aux168} to show that 
\begin{align*}
  \hat{v}_\psi(k)
  & = \frac{1}{\sqrt{2\pi}} \langle v , \psi \, e^{-\i k \, \cdot} \rangle_{\mathcal{E}'(\R; \mathcal{D}'(\R^3)) \times \mathcal{E}(\R; \D(\R^3))} 
  = \big\langle \hat{v}(k,\cdot), \,  \psi  \big\rangle_{\D'(\R^3)\times\D(\R^3)} \\
  & = \frac{1}{\sqrt{2\pi}} \bigg\langle   \int_{\Omega} \frac{\langle f(\cdot-|\cdot-y|/c_0, \, y), \, e^{-\i k\, \cdot} \, \rangle_{\mathcal{E}'(\R) \times \mathcal{E}(\R)}}{4\pi |\cdot-y|} \d{y}, \,\psi \bigg\rangle_{\D'(\R^3)\times\D(\R^3)} \\
  & = \bigg\langle \int_{\Omega} \frac{e^{\i k |\cdot -y|/c_0}}{4\pi |\cdot-y|} \hat{f}(k, y)\d{y} ,\psi \bigg\rangle_{\D'(\R^3)\times\D(\R^3)}
  \quad \text{for all $\psi \in \D(\R^3)$ and $k\in\R$.}
\end{align*}
We conclude that  
\begin{multline*}
  \langle \hat{v}, \chi \psi \rangle_{\D'(\R\times\R^3) \times \D(\R\times\R^3)}  
  = \langle v, \hat{\chi} \psi \rangle_{\D'(\R\times\R^3) \times \D(\R\times\R^3)}  
  = \langle v_\psi, \hat{\chi} \rangle_{\D'(\R) \times \D(\R)}  \\
  = \langle \hat{v}_\psi, \chi \rangle_{\D'(\R) \times \D(\R)} \nonumber 
  = \bigg\langle \int_{\Omega} \frac{e^{\i k |\cdot -y|/c_0}}{4\pi |\cdot-y|} \hat{f}(k, y)\d{y} ,\chi \psi \bigg\rangle_{\D'(\R\times\R^3)\times\D(\R\times\R^3)} 
\end{multline*}
holds for all $\chi\in \D(\R)$ and $\psi \in \D(\R^3)$, such that  
\begin{equation} \label{eq:fourier}
  \hat{v} (k,x)   = \int_{\Omega} \frac{e^{\i k |x-y|/c_0}}{4\pi |x-y|} \hat{f}(k, y)\d{y} \quad \text{holds in } \mathcal{D'}(\R \times \R^3). 
\end{equation}
We already noted that the latter distribution is analytic in $k$.
Moreover, the continuity of the Fourier transform on $\mathcal{S}'(\R)$ implies that $f \mapsto \hat{v}$ is continuous from $\E'(\R\times\R^3)$ into (vector-valued) distributions in $\mathcal{S}'(\R; \D'(\R^3))$; this subspace of $\D'(\R\times\R^3)$ contains distributions $w$ such that $w_\psi$ belongs to $\mathcal{S}'(\R)$ for all $\psi\in\D(\R^3)$. 

As $v=Vf$ solves the wave equation~\eqref{eq:waveEqC00}, $\hat{v}(k, \cdot)$ moreover solves the Helmholtz equation: 
Indeed, it is well-known that the time-harmonic volume potential $\hat{f}(k,\cdot) \mapsto  \int_{\Omega} \exp(\i k |x-y|/c_0) / [4\pi |x-y|] \, \hat{f}(k, y)\d{y}$ is for fixed $k$ continuous from $\E'(\R^3)$ into $\mathcal{D}'(\R^3)$, see~\cite{Hsiao2008}. 
Moreover, the resulting function $\hat{v}(k,\cdot)$ satisfies  
 \begin{equation}\label{eq:he}
   \Delta \hat{v}(k,\cdot) + \frac{k^2}{c_0^2} \hat{v}(k,\cdot) = \hat{f}(k,\cdot) 
   \quad \text{in } \mathcal{D}'(\R^3),
 \end{equation}
and $\hat{v}(k,\cdot)$ moreover is a real-analytic function outside the projection $\overline{\PR(\supp(f))}$ of $\supp(f)$ to $\R^3$ by Weyl's lemma, see~\cite{Horma1990}. 
The representation formula~\eqref{eq:fourier} implies that $v$ additionally satisfies Sommerfeld's radiation condition, see~\cite{Colto1998}.

\begin{lemma}\label{th:V}
  For $f \in \mathcal{E}'(\R \times \R^3)$ with compact support in $\R \times B_R$ for some $R=R(f)>0$, its retarded volume potential $v = Vf \in \mathcal{D'}(\R \times \R^3)$ is an expanding wave. 
  Further, any expanding wave can be represented by the volume potential of some $f \in \mathcal{E}'(\R \times \R^3)$. 
  The mapping $f \mapsto v$ is continuous from $\mathcal{E}'(\R \times \R^3)$ into $\D'(\R \times \R^3)$.
\end{lemma}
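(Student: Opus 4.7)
The plan is to recognize the retarded volume potential $V$ as the convolution with the causal fundamental solution $E(t,x) = \delta(t-|x|/c_0)/(4\pi|x|)$, which is supported on the forward light cone $\{(t,x) : c_0 t = |x|,\ t \geq 0\}$, so that \eqref{eq:rep0} reads $Vf = E \ast f$ for $f \in \mathcal{E}'(\R\times\R^3)$. For Part 1, the wave equation is verified via the standard identity $(\ddot E - c_0^2 \Delta E) \ast f = \delta_{(0,0)} \ast f = f$, which applies because $f$ has compact support. The expanding-wave support bound follows from $\supp(Vf) \subset \supp E + \supp f$: writing a typical point as $(t',x') = (t+s, x+y)$ with $c_0 t = |x|$, $t \geq 0$, and $(s,y) \in \supp f \subset [T_1, T_2] \times \overline{B_R}$, the triangle inequality gives $c_0 t' - |x'| = |x| + c_0 s - |x+y| \in [c_0 T_1 - R,\ c_0 T_2 + R]$, which bounds $|c_0 t' - |x'||$ by a constant $C = C(f)$.

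For Part 2, I take an expanding wave $v$ in the sense recalled before the lemma, namely a distributional solution of \eqref{eq:waveEqC00} for some compactly supported $f \in \mathcal{E}'(\R\times\R^3)$ satisfying $\supp v \subset \{(t,x) : |c_0 t - |x|| \leq C\}$. This support constraint immediately forces $c_0 t \geq |x| - C \geq -C$ on $\supp v$, so $v$ is causal. By Part 1, $Vf$ is another causal distributional solution of \eqref{eq:waveEqC00} with the same right-hand side, and Sobolev's uniqueness theorem for causal distributional solutions, cited from \cite{Sobol1936} in the discussion preceding the lemma, then forces $v = Vf$. The source $f$ already attached to the expanding wave by definition therefore provides the claimed volume-potential representation.

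For Part 3, I would argue by duality. For $w \in \D(\R\times\R^3)$, introduce $V^T w := \check E \ast w$, where $\check E(t,x) := E(-t,-x)$ is the advanced fundamental solution supported on the backward light cone. Since the convolution of a distribution with a test function is $C^\infty$, $V^T w \in \mathcal{E}(\R\times\R^3)$, and the standard adjoint identity $\langle Vf, w\rangle_{\D'\times\D} = \langle f, V^T w\rangle_{\mathcal{E}'\times\mathcal{E}}$ shows that $f \mapsto \langle Vf, w\rangle$ is weak-$^\ast$ continuous on $\mathcal{E}'(\R\times\R^3)$ for every $w \in \D(\R\times\R^3)$. This is exactly what the weak-$^\ast$ topology on $\D'(\R\times\R^3)$ requires for the continuity of $V: \mathcal{E}'(\R\times\R^3) \to \D'(\R\times\R^3)$. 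The only step that demands genuine care is verifying the transpose identity and the smoothness $V^T w \in \mathcal{E}$ rather than merely $V^T w \in \D'$; both reduce to standard convolution theory for distributions with one compactly supported factor, which is already implicit in the Fourier-domain computation leading to \eqref{eq:fourier}.
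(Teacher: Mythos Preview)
Your argument is correct. The paper does not supply its own proof of this lemma but simply refers to \cite[Chapter~6]{Rudin1991} and \cite[Section~2]{Fried1973}; your convolution-based treatment---writing $Vf = E \ast f$, reading off the expanding-wave support bound from $\supp E + \supp f$ via the triangle inequality, invoking the causal-uniqueness result of \cite{Sobol1936} (already cited in the discussion preceding the lemma) for the converse representation, and obtaining continuity by duality through $V^T w = \check E \ast w \in \mathcal{E}(\R\times\R^3)$---is precisely the standard distribution-theoretic route one would extract from those references, so there is nothing substantive to contrast.
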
 
See~\cite[Chapter 6]{Rudin1991} and~\cite[Section 2]{Fried1973} for a proof.

\begin{remark}\label{th:mapSobolev}
Continuity of the volume potential $V$ between suitable Sobolev spaces can also be shown by Laplace transform techniques, see~\cite{Lechl2015}:  
$V$ maps $L^2((0,T)\times B_R)$ continuously into $H^2(\R; L^2(B_R)) \cap H^1(\R; H^1(B_R))  \cap L^2(\R; H^2(B_R))$ for all balls $B_R$. 
For a square integrable source $f$, Morrey's inequality hence implies that $v = Vf$ is a continuous function in $t$ and $x$. 
The proof of Lemma 3.1 and Theorem 3.2 in~\cite{Lechl2015} can furthermore easily be adapted to show that $V$ boundedly maps sources $f \in L^2(\R; H^{-1}_0(B_R))$  into $L^2(\R; H^1(B_R))$ for all $R>0$. 
\end{remark} 

\section{Near field data, supports for sources, and unique determination}\label{se:near}
In this section, we model measurements of wave fields generated by sources on the right-hand side of the wave equation on the boundary of a $C^\infty$-smooth domain enclosing the source.  
For $T>0$ and a Lipschitz domain $\Omega \subset \R^3$ with connected complement we consider a source distribution $f \in \E'((0,T) \times \Omega)$ and identify $f$ by zero extension as an element of $\E'(\R \times \R^3)$.
We then associate to $f$ the trace $v|_{\R \times \partial\Omega}$ of the expanding wave $v \in \D'(\R \times \R^3)$ to~\eqref{eq:waveEqC00}, which is explicitly represented as a retarded volume potential in~\eqref{eq:rep0}.  
The mapping $f \mapsto v|_{\R \times \partial\Omega}$ then defines the near field operator
\[
  N: \, \E'((0,T) \times \Omega) \to \mathcal{D}'(\R\times\partial\Omega).
\]
The image space $\mathcal{D}'(\R\times\partial\Omega)$ has to be interpreted as the space of restrictions $w|_{\R\times\partial\Omega}$ of distributions $w\in\mathcal{D}'(\R\times\R^3)$.
Appendix 1 in~\cite{Shubi2001} shows that such a restriction is well-defined and bounded if we merely consider $w\in\mathcal{D}'(\R\times\R^3)$ such that the wave front set $\mathrm{WF}(w)$ does not intersect $\{ (t,x, \xi) \in \R\times\partial\Omega \times [\R^4 \setminus \{ 0 \}]: \, (t,x) \cdot \xi = 0 \}$. 
The next lemma shows that the latter condition is indeed satisfied for $v$ because $\R\times\partial\Omega$ is a time-like submanifold of $\R\times\R^3$. 

\begin{lemma}\label{th:mapN}
The near field operator $N$ is well-defined and continuous from $\E'((0,T) \times \Omega)$ into $\mathcal{D}'(\R\times\partial\Omega)$. 
For $f \in \E'((0,T) \times \Omega)$ there holds that $\supp(Nf) \subset (0,T+\diam(\Omega)/c_0) \times \partial\Omega$.
\end{lemma}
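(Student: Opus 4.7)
The plan is to establish three things in turn: (i) well-definedness of the trace $v|_{\R\times\partial\Omega}$ through a microlocal pull-back argument, (ii) continuity of $N$ by composition with $V$, and (iii) the support estimate via the strong Huygens principle in three spatial dimensions.

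First, since $f\in\E'((0,T)\times\Omega)$ has compact support inside the open set $(0,T)\times\Omega$, there exists an open neighbourhood $U$ of $\R\times\partial\Omega$ on which $v=Vf$ solves the \emph{homogeneous} wave equation $\ddot v-c_0^2\Delta v=0$. Elliptic regularity for $\partial_t^2-c_0^2\Delta$ outside its characteristic set forces the wave front set $\mathrm{WF}(v)|_U$ to lie in $\{(t,x;\tau,\xi)\in T^*(\R\times\R^3)\setminus\{0\}:\tau^2=c_0^2|\xi|^2\}$. At every $(t,x)\in\R\times\partial\Omega$ the non-zero conormal covectors take the form $(0,\lambda\nu(x))$ with $\lambda\neq 0$, hence $\tau^2-c_0^2|\xi|^2=-c_0^2\lambda^2<0$; in other words $\R\times\partial\Omega$ is time-like and its conormal bundle is disjoint from $\mathrm{WF}(v)$. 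The restriction criterion recalled in~\cite[Appendix~1]{Shubi2001} then supplies a well-defined pull-back $v|_{\R\times\partial\Omega}\in\D'(\R\times\partial\Omega)$.

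Continuity of $N$ follows from continuity of its two building blocks: Lemma~\ref{th:V} gives continuity of $f\mapsto v$ from $\E'(\R\times\R^3)$ into $\D'(\R\times\R^3)$, and the same wave front set analysis together with the continuity of microlocal pull-back yields continuity of $v\mapsto v|_{\R\times\partial\Omega}$ on the subspace of distributions whose wave front set avoids $N^*(\R\times\partial\Omega)$. Composition gives the claim.

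For the support bound, the key ingredient is the strong Huygens principle in three spatial dimensions: the causal fundamental solution $E(t,x)=\delta(t-|x|/c_0)/(4\pi|x|)$ is supported exactly on the forward light-cone surface $\{(t,x):t\geq 0,\ c_0t=|x|\}$. Since $f$ has compact support, $v=E*f$ and $\supp(v)\subset\supp(E)+\supp(f)$. Any $(t,x)\in\supp(v)$ therefore decomposes as $(s,y)+(\tau,z)$ with $(s,y)\in\supp(f)\subset(0,T)\times\Omega$ and $(\tau,z)\in\supp(E)$, so $c_0\tau=|z|$ and $t=s+|x-y|/c_0$. Restricting to $x\in\partial\Omega$ and using $y\in\Omega$ gives $|x-y|\leq\diam(\Omega)$; compactness of $\supp(f)$ inside $(0,T)\times\Omega$ provides $\delta>0$ with $s\in[\delta,T-\delta]$, hence $t\in[\delta,T-\delta+\diam(\Omega)/c_0]\subset(0,T+\diam(\Omega)/c_0)$. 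Passing to the trace (which cannot enlarge the projection of the support to $\R\times\partial\Omega$) yields $\supp(Nf)\subset(0,T+\diam(\Omega)/c_0)\times\partial\Omega$.

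The main obstacle is the microlocal step (i): one must justify that, although $v$ is a general distribution, it admits an unambiguous trace on $\R\times\partial\Omega$. Everything else reduces to geometric bookkeeping. Once one checks that the time-like geometry of $\R\times\partial\Omega$ is compatible with the characteristic set of $\partial_t^2-c_0^2\Delta$, the restriction theorem applies, the continuity in (ii) is automatic, and (iii) is a direct consequence of the sharp shape of $\supp(E)$ in three dimensions.
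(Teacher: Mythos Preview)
Your proposal is correct and follows essentially the same route as the paper's proof: both use that $v$ solves the homogeneous wave equation near $\R\times\partial\Omega$ to confine $\mathrm{WF}(v)$ to the characteristic set, observe that conormals to the time-like hypersurface $\R\times\partial\Omega$ have the form $(0,\nu)$ and hence miss that set, invoke the restriction criterion from~\cite[Appendix~1]{Shubi2001}, and then deduce the support bound from the retarded representation~\eqref{eq:rep0}. Your phrasing of the support step via $\supp(E)+\supp(f)$ and the explicit extraction of a margin $\delta>0$ is slightly cleaner than the paper's, but the content is the same.
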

\begin{proof}
We have already showed in the previous section that $f \mapsto v$ is continuous from $\E'(\R\times\R^3)$ into $\D'(\R\times\R^3)$. 
The restriction of $v$ to any neighborhood of $\R \times \partial\Omega$ contained in $\R^3 \setminus \overline{\supp(f)}$ satisfies the wave equation with zero right-side, such that the wave front set of $v$ is contained in $\R^4 \times X_0 \setminus \{ 0 \}$, where $X_0 = \{ \xi=(\xi_{\mathrm{t}}, \xi_{\mathrm{x}}) \in \R^4: \, c_0^2 \xi_{\mathrm{t}}^2 = |\xi_{\mathrm{x}}|^2 \}$. 
As any normal to the time-like manifold $\R \times \partial\Omega$ is of the form $(0,\nu)$ for some $\nu \in \R^3 \setminus \{ 0 \}$, this manifold is non-characteristic for the wave equation.  
The above-mentioned criterion from~\cite[Appendix 1]{Shubi2001} hence shows that the restriction $v|_{\R\times\partial\Omega}$ belongs to $\mathcal{D'}(\R\times\partial\Omega)$, and that $f \mapsto v|_{\R\times\partial\Omega}$ is continuous.
%
%
The representation formula~\eqref{eq:rep0} further implies that     
\[
  \supp(v) \subset \bigg\{ (t,x)\in\R \times \R^3: \, 
  0 < t - \max_{y\in \partial \Omega} \dist(x, y)/c_0 < T \bigg\}
\]  
because the support of $f$ is compactly contained in $(0,T)\times \Omega$. 
As $x \mapsto \max_{y\in \partial \Omega} \dist(x, y)$ attains its maximum on $\partial \Omega$ by compactness, the support of the restriction $v|_{\R \times \partial\Omega} \in \D'(\R \times \partial\Omega)$ is hence included in $(0,T+\diam(\Omega)/c_0) \times \partial \Omega$. 
\end{proof}

\begin{remark}
Various other continuity properties of $N$ can be shown if that operator is defined on a Sobolev space. 
Due to Remark~\ref{th:mapSobolev} we know for instance that $N$ is continuous from both $L^2((0,T)\times \Omega)$ and $L^2(\R; H^{-1}_0(\Omega))$ into $L^2(\R; L^2(\partial\Omega))$.
\end{remark}

As for $f \in \E'((0,T) \times \Omega)$ the support of $Nf$ is compactly contained in the set $(0,T+\diam(\Omega)/c_0) \times \partial \Omega$, we can restrict the near field operator $N$ to the time interval $(0,T+\diam(\Omega)/c_0) \times \partial\Omega$ without loosing information on the sound field. 
We hence redefine 
\begin{equation}\label{eq:redefN}
    N: \, \E'((0,T) \times \Omega) \to \mathcal{E'}\big( (0,T+\diam(\Omega)/c_0) \times \partial\Omega\big).
\end{equation}

\subsection{Supports for sources}
Given the trace of a wave field in $\mathcal{E'} ( (0,T+\diam(\Omega)/c_0) \times \partial\Omega )$ due to a source supported in $[0,T] \times \Omega$, we next indicate a subset of $[0,T] \times \Omega$ that supports a source generating that wave trace.  
To this end, we define for each point $(t,x) \in \R \times \R^3$ its forward cone 
\[
  c^{(+)}(t,x) = \left\{ (\tau,z)\in \R \times \R^3: \,  c_0 [\tau-t] = |z-x|, \, \tau \geq t \right\} \subset \R \times \R^3.
\]
The explicit representation~\eqref{eq:rep0} implies for any compact set $K \subset \R\times\R^3$ that the union $\cup_{(t,x) \in K} \, c^{(+)}(t,x)$ is the smallest set containing the support of any wave generated by a source supported in $K$.

\begin{lemma}\label{th:boundNearField}
For $g \in \Rg(N)$ with $\supp(g) \subset [0,T+\diam(\Omega)/c_0] \times \partial \Omega$ there is a pre-image $f \in \E'((0,T) \times \Omega)$ supported in 
\[
  \fProp(g) := \left\{ (t,x) \in (0,T) \times \Omega): \, c^{(+)}(t,x) \cap \R \times \partial \Omega \subset \supp(g) \right\}.
\]
\end{lemma}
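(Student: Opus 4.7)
The plan is as follows. Since $g \in \Rg(N)$, there exists some pre-image $\tilde{f} \in \E'((0,T) \times \Omega)$ with $N \tilde{f} = g$; the goal is to modify $\tilde{f}$ into a source supported in $\fProp(g)$ without changing the boundary trace. I would begin by recording the geometric fact that $\fProp(g)$ is a relatively closed subset of $(0,T) \times \Omega$ whose complement equals the union of the backward null cones $c^{(-)}(\tau,z) \cap ((0,T) \times \Omega)$ taken over all $(\tau,z) \in \R \times \partial\Omega \setminus \supp(g)$; openness of this union follows from the closedness of $\supp(g)$ and the continuity of the cone map.

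Next, I would reconstruct an expanding wave $v$ directly from $g$. In the exterior $\R \times (\R^3 \setminus \overline{\Omega})$, causality and the outgoing radiation condition uniquely determine an outgoing solution $v_{\mathrm{ext}}$ of the homogeneous wave equation with Dirichlet trace $g$ on $\R \times \partial\Omega$ (equivalently, $v_{\mathrm{ext}}$ coincides there with $V\tilde{f}$). From this exterior field I extract the Neumann trace $g_1 := \partial_\nu v_{\mathrm{ext}}|_{\R \times \partial\Omega}$, well-defined because $\R \times \partial\Omega$ is a time-like, non-characteristic hypersurface for the wave operator (see the argument in Lemma~\ref{th:mapN}). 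Solving the characteristic Cauchy problem for the homogeneous wave equation in $\R \times \Omega$ with data $(g, g_1)$ on $\R \times \partial\Omega$ produces an interior solution $v_{\mathrm{int}}$ whose Dirichlet and Neumann traces agree with those of $v_{\mathrm{ext}}$. The glued function $v$ is thus $C^1$ across $\R \times \partial\Omega$, its distributional source $f := \ddot{v} - c_0^2 \Delta v$ has compact support contained in $\overline{(0,T) \times \Omega}$, and by construction $Nf = g$.

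To close the proof, I would show $\supp(f) \subset \fProp(g)$ via Holmgren's unique continuation theorem. Wherever both traces $g$ and $g_1$ vanish on an open set $O \subset \R \times \partial\Omega \setminus \supp(g)$, the characteristic Cauchy data $(g,g_1)$ vanish on $O$, so $v_{\mathrm{int}}$ vanishes in the characteristic hull of $O$ inside $\R \times \Omega$, which is precisely the portion of the complement of $\fProp(g)$ associated with $O$. Consequently $v_{\mathrm{int}}$ matches $v_{\mathrm{ext}}$ across $O$ in this region, and $f$ vanishes there.

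The main obstacle is that $g_1$ does not automatically vanish on $\R \times \partial\Omega \setminus \supp(g)$: different pre-images $\tilde{f}$ can radiate the same $g$ while producing different Neumann traces, so the Holmgren step is not immediate. Resolving this would require either invoking the real-analyticity of $\hat{v}(k,\cdot)$ on $\R^3 \setminus \overline{\Omega}$ (noted after~\eqref{eq:he}) to show that the outgoing extension of $g$ forces $g_1$ to share the support of $g$, or else replacing the explicit Neumann computation by a direct propagation argument along characteristics from $\supp(g)$ backwards into $\Omega$, following the strategy of~\cite{Fried1973}.
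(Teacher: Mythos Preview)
The paper states Lemma~\ref{th:boundNearField} without proof: there is no argument between the lemma's statement and the sentence ``The subsequent proposition proves an alternative characterization of the set $\fProp(g)$\ldots''. So there is no paper proof to compare your proposal against.

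On its own merits, your proposal has a genuine gap that you yourself flag but do not close. The step ``solve the characteristic Cauchy problem for the homogeneous wave equation in $\R \times \Omega$ with data $(g,g_1)$ on $\R\times\partial\Omega$'' is doubly problematic. First, $\R\times\partial\Omega$ is \emph{non}-characteristic (as you correctly note earlier), so the terminology is off. Second, and decisively, the Cauchy problem for the wave operator with data on a timelike hypersurface is ill-posed: for generic $(g,g_1)$ no solution $v_{\mathrm{int}}$ exists, and when one does it depends discontinuously on the data. You therefore cannot construct $v_{\mathrm{int}}$ this way without already knowing that $(g,g_1)$ are the traces of some interior solution---which is exactly what taking $v_{\mathrm{int}} = V\tilde f|_{\R\times\Omega}$ would give, but then $f=\tilde f$ and nothing has been gained. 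Even setting this aside, your own observation that $g_1=\partial_\nu v_{\mathrm{ext}}$ need not vanish where $g$ does is fatal to the Holmgren step: a radiating exterior solution with $v|_O=0$ on an open $O\subset\R\times\partial\Omega$ can perfectly well have $\partial_\nu v|_O\neq 0$. The proposed remedies (real-analyticity of $\hat v(k,\cdot)$, or an unspecified backward-propagation argument \`a la~\cite{Fried1973}) are not worked out and do not obviously succeed. As written, the proposal is a sketch of a strategy together with an honest acknowledgement that its key step is missing.
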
 

The subsequent proposition proves an alternative characterization of the set $\fProp(g)$ that is simpler to approximate numerically.
Preparing this result, we define for $g \in \E'(\R \times \partial\Omega)$ and $y\in \partial \Omega$ the set $I(g,y) := \{ \tau \in \R: \,  (\tau,y) \in \supp(g) \} \subset \R$ containing all times $\tau$ such that $(\tau,y) \in \supp(g)$. 
Further,  
\[
  \bProp(g) :=  \bigcap_{y\in \partial\Omega} \bProp_y(g),
  \quad \text{where }
  \bProp_y (g) := \big\{ (t,x) \in \R\times\Omega: \,  t + |x-y|/c_0 \in I(g,y) \big\}
\]
contains all points in space-time included in the ``backward cone'' of some $(\tau,y) \in \supp(g)$. 

\begin{proposition}
For $g \in \Rg(N)$ with $\supp(g) \subset [0,T] \times \partial \Omega$ there holds 
$\fProp(g) = \bProp(g)$, such that $g$ possesses a pre-image supported in the latter set by Lemma~\ref{th:boundNearField}. 
\end{proposition}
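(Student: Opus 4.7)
My plan is to prove the equality by unfolding both definitions into a single geometric condition on $(t,x)$. The key observation is the explicit parametrization
\[
  c^{(+)}(t,x) \cap (\R \times \partial\Omega) = \{(t + |x-y|/c_0,\, y) : y \in \partial\Omega\},
\]
which follows directly from the definition of the forward cone: a point $(\tau, z)$ lies in the intersection iff $z = y \in \partial\Omega$ and $c_0(\tau - t) = |y - x|$ with $\tau \geq t$, uniquely fixing $\tau = t + |x-y|/c_0$ (the sign constraint being automatic since $|y-x|\geq 0$). Under this parametrization, the containment $c^{(+)}(t,x) \cap (\R \times \partial\Omega) \subset \supp(g)$ reads: for every $y \in \partial\Omega$, $(t + |x-y|/c_0,\, y) \in \supp(g)$, equivalently $t + |x-y|/c_0 \in I(g, y)$ for every $y \in \partial\Omega$. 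This is exactly the defining criterion for membership in the intersection $\bProp(g) = \bigcap_{y \in \partial\Omega} \bProp_y(g)$.

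Consequently the inclusion $\fProp(g) \subset \bProp(g)$ is immediate from the unfolding (once we note that $x \in \Omega$ holds on both sides by the respective definitions). For the reverse inclusion $\bProp(g) \subset \fProp(g)$, the geometric condition on the forward cone is recovered by the same unfolding, so what remains is to verify that $(t,x)$ actually lies in the slab $(0,T) \times \Omega$ required by the definition of $\fProp$. Using $\supp(g) \subset [0,T] \times \partial\Omega$, hence $I(g,y) \subset [0,T]$ for every $y$, and picking $y_{\max} \in \partial\Omega$ that maximizes $y \mapsto |x-y|$ (existing by compactness of $\partial\Omega$), one deduces $t \leq T - |x - y_{\max}|/c_0 < T$ since $x \in \Omega$ forces $|x - y_{\max}| > 0$.

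The main obstacle I expect is the lower bound $t > 0$. Applying the same reasoning to the closest boundary point $y_{\min} \in \partial\Omega$ only yields $t + |x - y_{\min}|/c_0 \geq 0$, i.e.\ $t \geq -\dist(x, \partial\Omega)/c_0$, which need not be strictly positive when $x$ is well inside $\Omega$. Closing this gap presumably requires exploiting additional structure from $g \in \Rg(N)$ beyond the bare support condition -- specifically, the causality built into the near field operator, which should force any $(t,x) \in \bProp(g)$ with $t \leq 0$ to be inconsistent with $g$ originating from a source supported in $(0,T) \times \Omega$. This causal constraint, rather than the essentially definitional unfolding, is where I anticipate the only non-routine step of the proof lies.
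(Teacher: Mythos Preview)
Your approach is essentially the one the paper takes: both proofs reduce to the observation that
\[
  c^{(+)}(t,x)\cap(\R\times\partial\Omega)=\bigl\{(t+|x-y|/c_0,\,y):y\in\partial\Omega\bigr\},
\]
after which the defining conditions of $\fProp(g)$ and $\bProp(g)$ become literally the same statement ``$t+|x-y|/c_0\in I(g,y)$ for every $y\in\partial\Omega$''. Your write-up is in fact cleaner than the paper's, which dresses the same computation in the language of direction vectors on $\S^3$.

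Regarding the obstacle you flag: the paper's proof does \emph{not} resolve it. In the $(\supset)$ direction the paper verifies the forward-cone containment and then simply asserts $(t,x)\in\fProp(g)$, without ever checking the constraint $(t,x)\in(0,T)\times\Omega$ built into the definition of $\fProp$. So the lower bound $t>0$ that worries you is equally absent from the paper's argument. This appears to be a looseness in the definitions (note also the mismatch between the support hypothesis $[0,T]\times\partial\Omega$ here and $[0,T+\diam(\Omega)/c_0]\times\partial\Omega$ in Lemma~\ref{th:boundNearField}) rather than a point where a substantive causal argument is hiding. In short: your unfolding is the whole proof as the paper conceives it; the extra step you anticipate is not supplied there either.
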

\begin{proof}
($\subset$) If $(t,x) \in \fProp(g)$, then $c^{(+)}(t,x) \cap \R \times \partial \Omega \subset \supp(g)$. 
For each $y\in \partial \Omega$ there hence exists $t^\ast\in \R$ and $\xi =(\xi_{\mathrm{t}}, \xi_{\mathrm{x}}) \in \S^3$ with $\xi_{\mathrm{t}}>0$ and $c_0 \, \xi_{\mathrm{t}} = |\xi_{\mathrm{x}}|$, such that 
\[
  (t,x) + \alpha \xi = (t^\ast,y) \qquad \text{for some } \alpha>0. 
\]
Hence, $t+\alpha \xi_{\mathrm{t}} = t^\ast$ and $x+\alpha \xi_{\mathrm{x}} = y$, which shows that $|x-y| = \alpha |\xi_{\mathrm{x}}| = \alpha c_0 \xi_{\mathrm{t}} = c_0 [t^\ast-t]$.  
As $t^\ast \in I(g,y)$ by construction of the latter set, we conclude that 
\[
  c_0 t + |y-x| = c_0 t^\ast \in c_0 I(g,y).
\]
We conclude that $(t,x) \in \bProp_y(g)$ for all $y\in \partial\Omega$, such that $(t,x)$ belongs to $\bProp(g)$.

($\supset$) If $(t,x) \in \bProp(g)  = \bigcap_{y\in \partial\Omega} \bProp_y(g)$, then $(t,x)$ belongs to the backward cone of all points $(\tau,y) \in \supp(g)$. 
Consequently, for all $(\tau,y) \in \supp(g)$ there holds 
\[
  c_0 t + |x-y| \in c_0 I(g,y). 
\]
Thus, there is $t^\ast \in I(g,y)$ with $t^\ast > t$ such that $c_0 [t-t^\ast] + |x-y| =0$. 
As for any $y \in \partial \Omega$, the point $(t^\ast, y)$ belongs by definition of $I(g,y)$ to $\supp(g)$, we conclude that $(t,x) \in \fProp(g)$. 
\end{proof}

\subsection{Unique continuation of near field wave data}
As the solution $v = Vf$ to the wave equation~\eqref{eq:waveEqC00} does not satisfy a unique continuation principle across characteristic manifolds, trace data $v|_{\R\times\partial\Omega}$ are in general insufficient to characterize the support of the source $f$ that radiates the wave field $v$.
In the sequel we prove such a characterization under two distinct assumptions on $\supp(f)$. 
To this end, let us recall that $\conv(K)$ denotes the convex hull of a set $K \subset \R\times\R^3$. 
Obviously, linearity of the wave equation implies that we merely need to consider the case of vanishing data. 

\begin{proposition}\label{th:ucp}
Assume that $N f = 0$ on $(0,T+\diam(\Omega)/c_0) \times \partial\Omega$ for some $f \in \E'((0,T)\times \Omega)$ supported in $K = \supp(f)$. 
Then $v = Vf$ vanishes in $(\R\times\R^3) \setminus \conv(K)$. 
If $K$ additionally is convex, i.\,e., if $K=\conv(K)$, then $v$ vanishes in $(\R\times\R^3) \setminus \supp(f)$.   
\end{proposition}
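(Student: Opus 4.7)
The strategy is in two phases. First, I use a Fourier-in-time reduction together with uniqueness for the exterior Dirichlet problem for the Helmholtz equation to conclude that $v \equiv 0$ on $\R \times (\R^3 \setminus \overline{\Omega})$. Second, I propagate this vanishing inward by a Holmgren-type unique continuation argument for the constant-coefficient wave operator across non-characteristic hyperplanes, forcing $v$ to vanish on $(\R \times \R^3) \setminus \conv(K)$.

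For phase one, $Nf = 0$ gives $v|_{\R \times \partial\Omega} = 0$, and by Lemma~\ref{th:mapN} this trace has compact support in $t$. Time-Fourier-transforming yields $\hat{v}(k, \cdot)|_{\partial\Omega} = 0$ for every real $k$. Outside $\overline{\Omega}$ the source $\hat{f}(k, \cdot)$ vanishes and $\hat{v}(k, \cdot)$ satisfies the homogeneous Helmholtz equation together with Sommerfeld's radiation condition (as established below \eqref{eq:fourier}), so uniqueness of the exterior Dirichlet problem forces $\hat{v}(k, \cdot) \equiv 0$ on $\R^3 \setminus \overline{\Omega}$. Inverting the Fourier transform gives $v \equiv 0$ on $\R \times (\R^3 \setminus \overline{\Omega})$, and combining this with the expanding-wave property from Lemma~\ref{th:V} confines $\supp(v)$ to a compact subset of $\R \times \overline{\Omega}$.

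For phase two, fix $p_0 = (t_0, x_0) \notin \conv(K)$. Strict separation in $\R^4$ yields a unit normal $\nu = (a, b) \in S^3$ with $s_* := \sup_{p \in \conv(K)} \nu \cdot p < \nu \cdot p_0$. Since the characteristic directions $\{c_0|a| = |b|\}$ form a nowhere-dense subset of $S^3$, I perturb $\nu$ slightly to be non-characteristic for the wave operator while maintaining strict separation. Compactness of $\supp(v)$ allows me to pick $s^\sharp > s_*$ so large that the half-space $H^\sharp := \{p : \nu \cdot p > s^\sharp\}$ is disjoint from $\supp(v)$. On the slab $S := \{p : \nu \cdot p > s_*\}$ the source $f$ vanishes (because $K \subset \{\nu \cdot p \leq s_*\}$), so $v$ solves $\ddot{v} - c_0^2 \Delta v = 0$ there and vanishes identically on the sub-slab $H^\sharp \subset S$. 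Holmgren's unique continuation theorem, applied across the non-characteristic foliation $\{\nu \cdot p = s\}_{s \in (s_*, s^\sharp]}$, propagates the vanishing from $H^\sharp$ through every leaf of $S$, giving $v \equiv 0$ on $S$ and in particular $v(p_0) = 0$. Since $p_0$ was arbitrary, $v \equiv 0$ on $(\R \times \R^3) \setminus \conv(K)$, and the second assertion is the special case $K = \conv(K)$.

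The main obstacle is the invocation of Holmgren's theorem in the distributional setting together with the joint choice of a non-characteristic separating hyperplane and a valid starting half-space $H^\sharp$. The first is handled by the classical constant-coefficient version of Holmgren's theorem (F.~John) applied to distributions; the second follows from the genericity of non-characteristic directions in $S^3$ combined with the space-time compactness of $\supp(v)$ secured in phase one.
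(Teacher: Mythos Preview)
Your argument is correct and follows essentially the same two-phase strategy as the paper: a Fourier-in-time reduction to the radiating Helmholtz problem to obtain vanishing outside a bounded spatial region, followed by Holmgren-type unique continuation across non-characteristic hyperplanes (with a perturbation to avoid the characteristic set) to reach the complement of $\conv(K)$. The only organisational difference is that the paper approximates $\conv(K)$ by a sequence of convex polytopes and treats each facet, perturbing those facets whose normals happen to be characteristic, whereas you work pointwise by choosing for each $p_0\notin\conv(K)$ a single separating hyperplane and perturbing its normal off the characteristic cone; your version is slightly more direct and avoids the Hausdorff-approximation machinery, but the underlying analytic content (Helmholtz uniqueness plus Holmgren across generic hyperplanes) is identical.
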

\begin{proof}
(1) We have already discussed in Section~\ref{se:waves} that the Fourier transform $\hat{v}(k, \cdot)$ of the volume potential $v = Vf$ in $\D'(\R\times \R^3)$ analytic in $k\in \R$, see~\eqref{eq:fourier}. 
As $v$ solves the wave equation, $\hat{v}(k,x)$ moreover solves the Helmholtz equation 
\[
  \Delta \hat{v}(k,\cdot) + \frac{k^2}{c_0^2} \hat{v}(k,\cdot) = \hat{f}(k,\cdot) 
  \qquad \text{in } \mathcal{D'}(\R^3),
\]
together with Sommerfeld's radiation condition, which follows, e.\,g., from the representation of $v$ in~\eqref{eq:rep0}. 
Note that the support of $\hat{f}(k,\cdot) \in \D'(\R^3)$ is included in the closed set  $P_{\R^3}K = \{ x \in \R^3: \, \exists t>0 \text{ such that } (t,x) \in K\} \subset \R^3$, such that $\hat{v}(k,\cdot)$ is real-analytic in $\R^3 \setminus P_{\R^3}K$ due to Weyl's lemma. 

(2) As $v = Vf$ vanishes by assumption on $(0,T+\diam(\Omega)/c_0) \times \partial \Omega$, Lemma~\ref{th:mapN} shows that $v|_{\R\times\partial\Omega} \equiv 0$.
Thus, for all $k\in\R$ there holds that $\hat{v}(k,\cdot)$ vanishes on $\partial\Omega$ as well.  
Since $\hat{v}(k,\cdot)$ is for $k\not = 0$ a radiating solution of the Helmholtz equation in $\R^3 \setminus P_{\R^3}K$, this implies first that $\hat{v}(k,\cdot)$ vanishes in the complement of $\Omega$ and second by analyticity that $\hat{v}(k,\cdot)$ must vanish in the unbounded component of the complement of $P_{\R^3}K$ for arbitrary $k\in\R$ different from zero, see~\cite{Colto1998}. 
Continuity of $k\mapsto \hat{v}(k,\cdot)$ then implies that $\hat{v}(0,\cdot)$ vanishes in that set, too.
Thus, $v$ must also vanish in $\R \times (\R^3 \setminus P_{\R^3} K)$. 
Lemma~\ref{th:mapN} moreover show that $v$ even vanishes in $(\R \times \R^3) \setminus \big((0,T+\diam(\Omega)/c_0) \times P_{\R^3}K \big)$.
 
(3) By either Holmgren's lemma, see~\cite{Treve1975}, or by~\cite[Theorem 1]{Tarta1999}, unique continuation for solutions the the wave equation 
holds across any analytic submanifold that is non-characteristic at each of its points. 
Since $\conv(K)$ is convex, it can be approximated in the Hausdorff distance $d_H$ by a sequence of (closed) polytopes $\{ P_n \}_{n\in\N}$ that contain $\conv(K)$, see, e.\,g., the survey~\cite{Brons2008}. 
As $d_H(P_n,\conv(K)) \to 0$ as $n\to\infty$, the definition of the Hausdorff distance and the convexity of $\conv(K)$ show that the same holds for the convex hull of the $P_n$, which is a closed convex polytope. 
We can hence assume that all $P_n$ are convex polytopes and represent each of them as bounded intersection of a finite set of closed half-spaces $E(\xi_m^{(n)},a_m^{(n)}) := \{ (t,x) \in \R \times \R^3:  \, (t,x) \cdot \xi_m^{(n)} \geq a_m^{(n)} \}$ with $\xi_m^{(n)} \in \R^4$, $a_m^{(n)} \in \R$, and $m=1,\dots,M(n) \in \N$, 
\[
  P_n = \bigcap_{m=1}^{M(n)} E(\xi_m^{(n)},a_m^{(n)}). 
\]
Recall that the bounding hyperplane $\partial E(\xi_m^{(n)},a_m^{(n)})$ is characteristic for the wave equation if and only if $\xi_m^{(n)} = (\xi_{m,\mathrm{t}}^{(n)}, \xi_{m,\mathrm{x}}^{(n)})^\top  \in \R^4$ satisfies $|\xi_{m,\mathrm{x}}^{(n)}|^2 = c_0^2 [\xi_{m,\mathrm{t}}^{(n)}]^2$.

(4) Assume that some facet of $P_n$ is subset of a non-characteristic hyperplane bounding the half space $E(\xi_m^{(n)},a_m^{(n)})$, i.\,e., the direction vector $\xi_m^{(n)} = (\xi_{m,\mathrm{t}}^{(n)}, \xi_{m,\mathrm{x}}^{(n)})$ satisfies $|\xi_{m,\mathrm{x}}^{(n)}|^2 \not = c_0^2 |\xi_{m,\mathrm{t}}^{(n)}|^2$.
As $v$ vanishes in $(\R \times \R^3) \setminus ((0,T+\diam(\Omega)/c_0) \times \Omega)$, there is $a_0 \geq  a_m^{(n)}$ such that $v$ vanishes in $E(\xi_m^{(n)},a_0) \subset E(\xi_m^{(n)},a_m^{(n)})$. 
Unique continuation across $\partial E(\xi_m^{(n)},a)$ with $a_m^{(n)} \leq a \leq a_0$ then yields that $v$ vanishes in $E(\xi_m^{(n)},a_m^{(n)})$.

(5) Assume next that some facet of $P_n$ is subset of a characteristic hyperplane $\partial E(\xi_m^{(n)},a_m^{(n)})$, i.\,e., $|\xi_{m,\mathrm{x}}^{(n)}|^2 = c_0^2  |\xi_{m,\mathrm{t}}^{(n)}|^2$.
Replacing $\xi_m^{(n)}$ by $\xi_m^{(n),+} := (\xi_{m,\mathrm{t}}^{(n)} + 1/n, \xi_{m,\mathrm{x}}^{(n)})^\top$ then yields the half-space $E(\xi_m^{(n),\pm},a_m^{(n)})$. 
Without loss of generality, we can assume that $\conv(K)$ and $E(\xi_m^{(n),\pm},a_m^{(n)})$ have empty intersection. 
(Otherwise, increase the parameter $a_m^{(n)}$ until the half space does not longer contain elements of $\conv(K)$.
Since $|\xi_m^{(n),+} - \xi_m^{(n)}| \to 0$, this is always possible for some sequence $a_m^{(n),+}$ such that $0 < a_m^{(n),+} -  a_m^{(n)}  \to 0$.) 
Set now $P_n^+ = \bigcap_{m=1}^{M(n)} E(\xi_m^{(n),+},a_m^{(n)})$. 
As $|\xi_m^{(n),+} - \xi_m^{(n)}| \to 0$ and $d_H(P_n,\conv(K)) \to 0$, the definition of the Hausdorff distance implies that $d_H(P_n^+,\conv(K)) \to 0$ as well. 
Moreover, all facets of $P_n^+$ are by construction non-characteristic, such that the unique continuation argument from part (4) shows that $v$ vanishes outside $P_n^+$. 

As the polytopes $P_n^+$ approximate $\conv(K)$ in the Hausdorff distance, there is for each ball $B \subset (\R\times\R^3) \setminus \conv(K)$ a number $n_0\in \N$ such that $B \subset (\R\times\R^3) \setminus P_n^+$ for $n\geq n_0$. 
Thus, the solution $v$ hence vanishes in $(\R \times \R^3) \setminus \bigcap_{n \in \N} P_n^+ = (\R \times \R^3) \setminus \conv(K)$.
\end{proof}

The last propositions' convexity assumption on the support $K$ of the source term can actually be relaxed.
To this end, let us call $\xi = (\xi^{(1,2)}_{\mathrm{t}}, \xi^{(1,2)}_{\mathrm{x}})^\top \in \S^3 = \{ y \in \R^4: \, |y|=1\}$ with $\pm \xi_{\mathrm{t}}^2 = c_0^2 |\xi^{(1,2)}_{\mathrm{x}}|^2$ a characteristic direction in space-time. The set of all characteristic directions is $X \subset \S^3$ and two characteristic directions $\xi^{(1,2)} \in X$ are called flipped if either $\xi^{(1)}_{\mathrm{t}} = - \xi^{(2)}_{\mathrm{t}}$ or $\xi^{(1)}_{\mathrm{x}} = -\xi^{(2)}_{\mathrm{x}}$. 
For flipped characteristic directions $\xi^{(1,2)}\in X$ we denote the (closed, one-dimensional) set of directions in $\S^3$ that lie on the unique geodesic of $\S^3$ in between $\xi^{(1)}$ and $\xi^{(2)}$ by $\gamma(\xi^{(1)},\xi^{(2)})$.
These directions define the two-dimensional sector  
\[
  \cone\big(y,\xi^{(1)},\xi^{(2)}\big) = \left\{ y+\alpha \xi: \, \alpha \geq 0, \, \xi \in \gamma(\xi^{(1)},\xi^{(2)}) \right\} 
  \quad \text{for any $y \in \R^4$,} 
\]
as well as its open $\epsilon$-neighborhood $\cone_\epsilon (y,\xi^{(1)},\xi^{(2)}) \subset \R^4$ for $\epsilon>0$. 
(See Figure~\ref{fig:0} for a sketch.)
\begin{figure}[t!!!!!h!!!b!!]   
\centering
\begin{tabular}{c c}%
\includegraphics[width=0.45\linewidth]{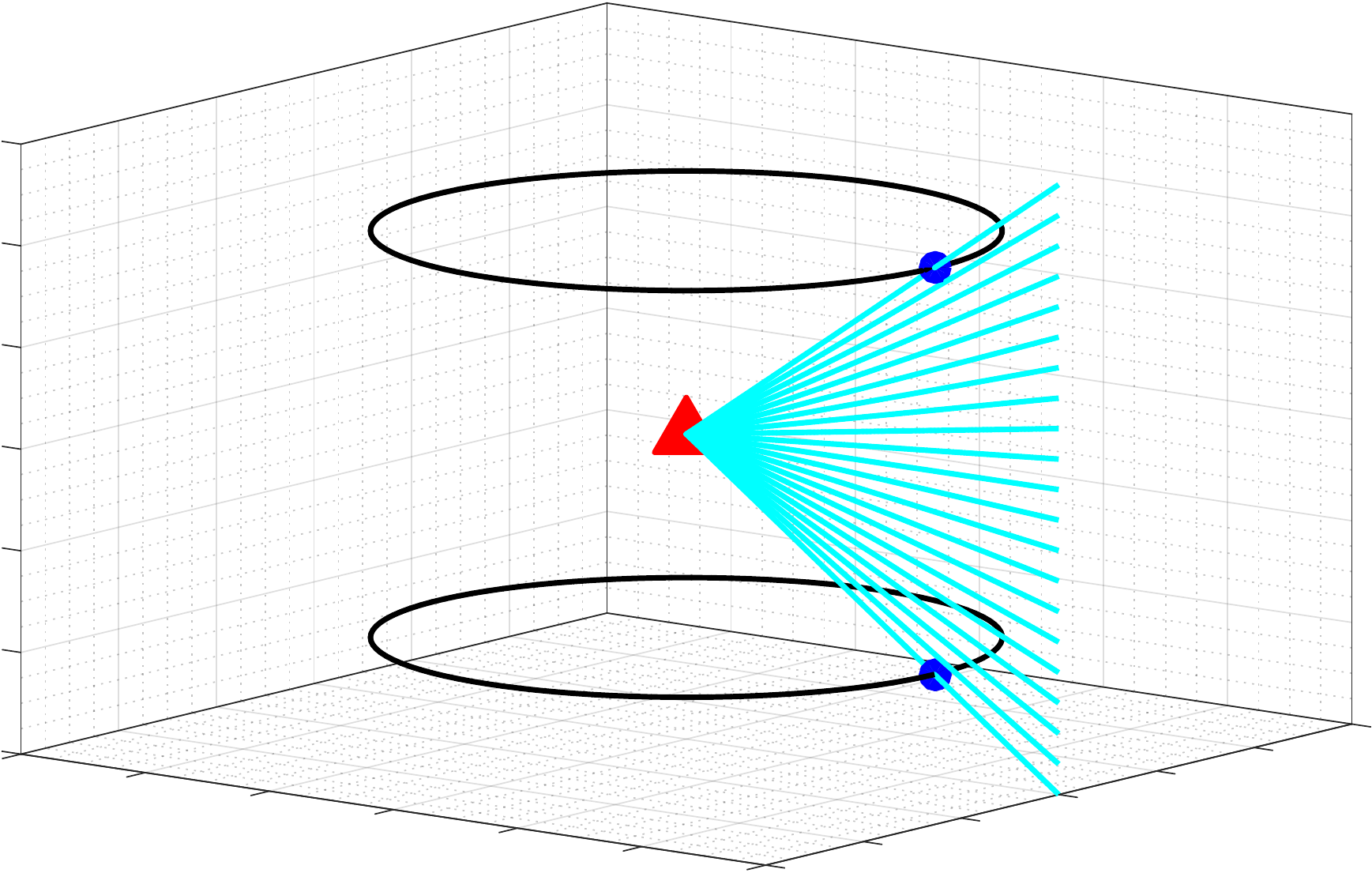}
&\includegraphics[width=0.45\linewidth]{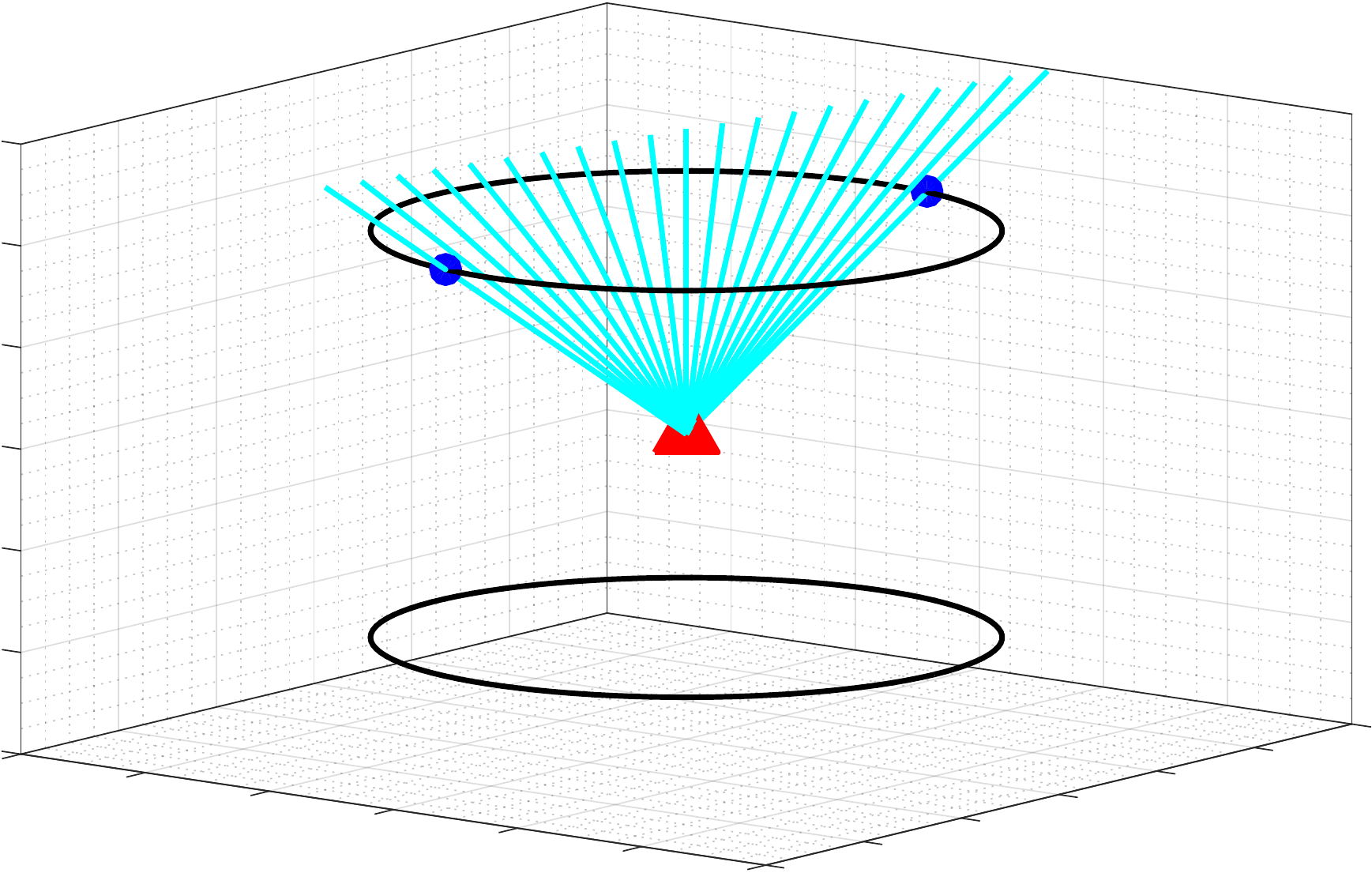}
\end{tabular}
\caption{Sketches of flipped characteristic directions $\xi^{(1,2)}$ (in three dimensions, for simplicity):
Horizontal variables correspond to $\xi_{\mathrm{x}}$, the vertical one to $\xi_{\mathrm{t}}$. 
Two circles mark the set of characteristic directions $X$, a triangle marks the origin, and two balls mark a pair of flipped characteristic directions $\xi^{(1,2)}$. 
The sector $\wedge(0,\xi^{(1)},\xi^{(2)})$ is marked by a fan of lines.}
\label{fig:0}
\end{figure} 
For any compact set $K \subset \R \times \R^3$ we finally define the characteristic hull $\ch(K)$ as complement of the union of all neighborhoods $\cone_\epsilon\big(y,\xi^{(1)},\xi^{(2)}\big)$ with flipped directions $\xi^{(1,2)}$ that do not intersect~$K$.
More precisely,
\begin{equation}\label{eq:cc}
  \ch(K) = \R^4 \setminus \bigcup \cone_\epsilon\big(y,\xi^{(1)},\xi^{(2)}\big)
\end{equation}
where the union is taken over all $y\in \R^4$, flipped directions $\xi^{(1,2)}$, and $\epsilon>0$ such that $\cone_\epsilon\big(y,\xi^{(1)},\xi^{(2)}\big) \cap K = \emptyset$. 
By construction, $\ch(K)$ hence contains $K$ (and $\overline{K}$ as well, by compactness). 
As $\ch(K)$ is the complement of a union of open sets, $\ch(K)$ moreover is closed. 

\begin{lemma}
  For any compact set $K$ there holds $K \subset \ch(K) \subset \conv(K)$.
\end{lemma}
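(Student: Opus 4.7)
The first inclusion $K \subset \ch(K)$ is immediate from the definition \eqref{eq:cc}: any neighborhood $\cone_\epsilon(y,\xi^{(1)},\xi^{(2)})$ whose removal defines $\ch(K)$ is by hypothesis disjoint from $K$, so no point of $K$ is ever removed.

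For the harder inclusion $\ch(K) \subset \conv(K)$ I would argue contrapositively: fix $z \notin \conv(K)$. Since $K$ is compact, so is $\conv(K)$, and Hahn--Banach separation yields $\eta \in \R^4 \setminus \{0\}$ and $a \in \R$ with $\eta \cdot z < a < \eta \cdot w$ for every $w \in \conv(K)$. The plan is to exhibit flipped characteristic directions $\xi^{(1,2)}$ and a vertex $y \in \R^4$ such that the sector $\cone(y,\xi^{(1)},\xi^{(2)})$ contains $z$ and lies strictly inside the open half-space $\{w : \eta \cdot w < a\}$. Once this is done, $K$ is at positive distance from the closed half-space $\{\eta \cdot w \leq a\}$ by compactness, so a sufficiently small $\epsilon$-tube $\cone_\epsilon(y,\xi^{(1)},\xi^{(2)})$ still contains $z$ yet remains disjoint from $K$, giving $z \notin \ch(K)$ by \eqref{eq:cc}.

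The main obstacle is constructing the flipped pair inside the open hemisphere $H = \{\xi \in \S^3 : \eta \cdot \xi < 0\}$. For any two non-antipodal points of $H$, the great-circle arc between them automatically stays in $H$: the parametrization $\xi(s) = (\sin(s_0{-}s)\xi^{(1)} + \sin(s)\xi^{(2)})/\sin(s_0)$ for $s \in [0,s_0] \subset (0,\pi)$ expresses $\eta\cdot\xi(s)$ as a strictly positive combination of two negative numbers. Granted such a pair, setting $y = z - \alpha_0 \xi_0$ for any $\xi_0 \in \gamma(\xi^{(1)},\xi^{(2)})$ and sufficiently small $\alpha_0 > 0$ (with $\alpha_0 \, |\eta \cdot \xi_0| < a - \eta \cdot z$) places $z$ on the ray $y + \alpha_0 \xi_0$ while keeping $\eta \cdot y < a$; the inequality $\eta \cdot (y + \alpha \xi) \leq \eta \cdot y$ for all $\alpha \geq 0$ and $\xi \in \gamma(\xi^{(1)},\xi^{(2)})$ then confines the whole sector to the desired half-space.

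It remains to construct the flipped pair in $H$, which I do by cases on how $H$ meets the two disjoint characteristic spheres $X_\pm = \{\xi \in X : \xi_{\mathrm{t}} = \pm 1/\sqrt{1+c_0^2}\}$. If both $X_+ \cap H$ and $X_- \cap H$ are nonempty, I choose $\xi^{(1)} \in X_+ \cap H$ and $\xi^{(2)} \in X_- \cap H$: this is a flipped pair of the first kind ($\xi^{(1)}_{\mathrm{t}} = -\xi^{(2)}_{\mathrm{t}}$), and a small perturbation of $\xi^{(2)}$ on the two-dimensional sphere $X_-$ avoids the single antipodal value $\xi^{(2)}_{\mathrm{x}} = -\xi^{(1)}_{\mathrm{x}}$. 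If only one sphere meets $H$, say $X_-$, then $X_+ \cap H = \emptyset$ forces $\eta_{\mathrm{t}} \geq c_0 |\eta_{\mathrm{x}}|$, hence $\eta_{\mathrm{t}} > 0$ (because $\eta \neq 0$); for any $\xi^{(1)} \in X_-$ with $\xi^{(1)}_{\mathrm{x}}$ orthogonal to $\eta_{\mathrm{x}}$ (always possible in $\R^3$), the spatial flip $\xi^{(2)} = (\xi^{(1)}_{\mathrm{t}}, -\xi^{(1)}_{\mathrm{x}})$ is a non-antipodal flipped pair of the second kind, and both satisfy $\eta \cdot \xi^{(1,2)} = -\eta_{\mathrm{t}}/\sqrt{1+c_0^2} < 0$, so both lie in $H$. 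The remaining possibility that neither sphere meets $H$ would force $\eta = 0$ and is thus ruled out, completing the construction.
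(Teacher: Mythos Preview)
Your argument is correct and follows the same separating-hyperplane route as the paper. The paper's version is briefer in two respects: it places the sector's vertex directly at the point outside $\conv(K)$ (so your shift $y=z-\alpha_0\xi_0$ is unnecessary---taking $y=z$ already puts $z$ in the sector via $\alpha=0$), and it simply asserts that every closed half-space $E$ with $K\cap E=\emptyset$ contains a sector $\wedge(y,\xi^{(1)},\xi^{(2)})$ for some flipped pair, leaving the verification to the reader. Your case analysis on how the open hemisphere $H$ meets the two characteristic circles $X_\pm$ is precisely what is needed to justify that assertion, so your proof is in effect a fleshed-out version of the paper's sketch.
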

\begin{proof}
If $y\in\R^4$ is a point outside the convex hull $\conv(K)$, then there is a closed half space $E$ such that $y$ is a boundary point of $E$ and $K \cap E = \emptyset$. 
For any such closed half space, there always exist two flipped characteristic directions $\xi^{(1,2)} \in X$ such that $\wedge(y,\xi^{(1)},\xi^{(2)})$ is a subset of $E$, which implies that $\wedge(y,\xi^{(1)},\xi^{(2)}) \cap K = \emptyset$.
As $K$ is closed, we deduce that there is $\epsilon>0$ such that $\wedge_\epsilon(y,\xi^{(1)},\xi^{(2)}) \cap K = \emptyset$, which implies that $y$ cannot belong to $\ch(K)$. 
\end{proof}

\begin{theorem}\label{th:ucp2}
Assume that $N f = 0$ on $(0,T+\diam(\Omega)/c_0) \times \partial\Omega$ for some $f \in \E'((0,T)\times \Omega)$ supported in $K = \supp(f)$. 
Then $v = Vf$ vanishes in $(\R\times\R^3) \setminus \ch(K)$.  
If there additionally holds $K = \ch(K)$, then $v$ vanishes $(\R\times\R^3) \setminus \supp(f)$. 
\end{theorem}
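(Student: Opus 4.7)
The plan is to extend the polytope-approximation argument from the proof of Proposition~\ref{th:ucp} by carving out the forbidden sectors used to define $\ch(K)$. Proposition~\ref{th:ucp} immediately yields $v \equiv 0$ on $(\R\times\R^3) \setminus \conv(K)$, and since $\ch(K) \subset \conv(K)$ only the region $\conv(K)\setminus\ch(K)$ remains to be handled. By the definition~\eqref{eq:cc}, the complement $(\R\times\R^3)\setminus\ch(K)$ is the union of the open sector-neighborhoods $\wedge_\epsilon(y,\xi^{(1)},\xi^{(2)})$ indexed by apex points $y\in\R^4$, flipped characteristic pairs $\xi^{(1,2)}\in X$, and $\epsilon>0$ with $\wedge_\epsilon \cap K = \emptyset$, so it suffices to fix one such sector-neighborhood and prove that $v$ vanishes on it.

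For a fixed forbidden sector-neighborhood, the plan is to perturb both characteristic boundary directions $\xi^{(i)}$ to non-characteristic ones exactly as in step~(5) of Proposition~\ref{th:ucp}'s proof: with $\tilde\xi^{(i,n)} := (\xi^{(i)}_{\mathrm t} + 1/n,\xi^{(i)}_{\mathrm x})/|(\xi^{(i)}_{\mathrm t} + 1/n,\xi^{(i)}_{\mathrm x})|$, the spatial part is unchanged while $c_0^2(\tilde\xi^{(i,n)}_{\mathrm t})^2 \neq |\tilde\xi^{(i,n)}_{\mathrm x}|^2$, so that each $\tilde\xi^{(i,n)}$ is non-characteristic. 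The perturbed sector $\tilde\wedge^n := \wedge(y,\tilde\xi^{(1,n)},\tilde\xi^{(2,n)})$ converges to $\wedge$ in Hausdorff distance, and a Hausdorff-distance estimate shows that $\tilde\wedge^n$ together with a small neighborhood remains disjoint from $K$ for $n$ large. Around $\tilde\wedge^n$ one then constructs a polytope $P_n \subset \R^4$ containing $K$ but excluding a neighborhood of $\tilde\wedge^n$, whose faces all have non-characteristic normals. Applying parts~(3)-(5) of Proposition~\ref{th:ucp}'s proof to $P_n$ then gives $v \equiv 0$ outside $P_n$, in particular on $\tilde\wedge^n$; sending $n\to\infty$ yields $v \equiv 0$ on $\wedge_\epsilon$.

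The main obstacle will be the concrete geometric construction of the carving polytope $P_n$: the sector $\tilde\wedge^n$ is two-dimensional in $\R^4$, whereas polytope faces are three-dimensional, so a single pair of half-spaces cannot bound it. The remedy is to coordinate several hyperplanes across the two directions of the orthogonal complement $P^\perp$ (with $P := \operatorname{span}(\xi^{(1)},\xi^{(2)})$), realizing $\tilde\wedge^n$ as an intersection edge of several non-characteristic faces of $P_n$. Once this is correctly arranged, the unique-continuation-across-faces argument and the Hausdorff-distance limit proceed essentially as in Proposition~\ref{th:ucp}. The supplementary assertion $v \equiv 0$ on $(\R\times\R^3)\setminus\supp(f)$ when $K = \ch(K)$ is then immediate, since $(\R\times\R^3)\setminus\ch(K) = (\R\times\R^3)\setminus\supp(f)$ in that case.
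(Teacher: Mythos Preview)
Your proposal contains a genuine gap. The core difficulty is that the interesting case is precisely when the sector $\wedge_\epsilon(y,\xi^{(1)},\xi^{(2)})$ penetrates the interior of $\conv(K)$ --- otherwise Proposition~\ref{th:ucp} already does the job. In that situation, any polytope $P_n$ that contains $K$ but excludes a neighborhood of the sector is necessarily \emph{non-convex}: a convex $P_n \supset K$ automatically contains $\conv(K)$, hence cannot exclude a sector passing through $\mathrm{int}(\conv(K))$. But parts~(3)--(5) of the proof of Proposition~\ref{th:ucp} rely essentially on convexity: step~(4) pushes an entire hyperplane $\partial E(\xi_m^{(n)},a)$ inward from $a_0$ to $a_m^{(n)}$, using that the whole half-space $E(\xi_m^{(n)},a)$ stays disjoint from $K$ throughout. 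For the re-entrant faces of a non-convex $P_n$ carved along the sector, the corresponding full hyperplanes will inevitably meet $K$, so Holmgren across the full hyperplane is unavailable. Your ``remedy'' of coordinating several hyperplanes in $P^\perp$ does not address this: it describes how to bound the tube laterally, but not how to propagate $v=0$ \emph{into} the tube from outside $\conv(K)$ all the way to the apex.

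The paper takes a genuinely different route. Instead of flat faces, it constructs explicit one-parameter families of \emph{curved} analytic hypersurfaces $\Sigma_{p,b}^{(1,2)}$ (roughly, $p$-norm cones with a weight on the transverse coordinates) that are everywhere non-characteristic for $1<p<2$ and that converge, as $p\to 1$ and $b\to 0$, to the boundary rays of the two-dimensional sector $\wedge(y,\xi^{(1)},\xi^{(2)})$. Because they collapse onto the sector boundary, these surfaces stay disjoint from $K$ for parameters close to the limit; sweeping $b$ then fills in the relative interior of the sector via Holmgren across each $\Sigma_{p,b}$. The curvature is precisely what lets the sweeping surfaces avoid $K$ while still reaching points inside $\conv(K)$ --- something no family of affine hyperplanes can do. If you want to salvage a hyperplane-based argument, you would need to run a \emph{local} Holmgren continuation inside the open tube $\wedge_\epsilon$ (where $f\equiv 0$), sliding a non-characteristic cap hyperplane along the sector's bisector direction and using compactness of the cap's intersection with $\wedge_\epsilon$; this is a different argument from the one you invoke and should be spelled out carefully.
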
 
\begin{proof}
It is clear by Proposition~\ref{th:ucp} that $v$ vanishes in $(\R\times\R^3)\setminus \conv(K)$. 
Consider now any point $(t,x) \not\in \ch(K)$.
Then there are two flipped characteristic directions $\xi^{(1,2)} = (\xi_{\mathrm{t}}^{(1,2)}, \xi_{\mathrm{x}}^{(1,2)})^\top \in \S^3$ and $\epsilon>0$ such that $\cone_\epsilon (y,\xi^{(1)},\xi^{(2)})$ does not intersect $K$ for some $\epsilon > 0$. 
In particular, $\cone (y,\xi^{(1)},\xi^{(2)})$ does not intersect $K$ as well. 
The proof now, roughly speaking, approximates this sector by non-characteristic surfaces. 

We distinguish two cases: 
\begin{itemize}
\item If $\xi^{(1)}_{\mathrm{t}}=\xi^{(2)}_{\mathrm{t}}$, then $\cone (y,\xi^{(1)},\xi^{(2)}) = \{ (t,x) + \alpha \xi: \, \alpha \geq 0, \, \xi \in \gamma(\xi^{(1)},\xi^{(2)}) \}$ is a sector in $\R^4$ that points towards $t=+\infty$ or $t=-\infty$.
By translating the coordinate system in $\R^4$ and rotating it with respect to the space variable, we can without loss of generality assume that $(t,x) = (0,0,0,0)^\top$ and that $\xi^{(1,2)}_{\mathrm{x}}/|\xi^{(1,2)}_{\mathrm{x}}|  = (\pm 1,0,0)^\top$.
By assumption, $\{ (0,0,0,0)^\top + \alpha \xi \}_{\alpha \geq 0} = \{ \alpha \xi \}_{\alpha \geq 0}$ hence does not intersect $K$ for any $\xi \in \gamma(\xi^{(1)},\xi^{(2)})$.

Consider for $p \in [1,2)$ and $b\geq 0$ the surfaces 
\begin{equation}\label{eq:S1} 
  \Sigma_{p,b}^{(1)} := \Big\{ (\tau,z)\in\R^4: \, c_0^p |\tau-b|^p = |z_1|^p + e^{\frac{(p-1)^2}{2-p}} [|z_2|^p + |z_3|^p] \text{ and } \tau \geq b \Big\}.
\end{equation}
This surface is characterized by $\phi^{(1)}: \R^4 \to \R$, $\phi^{(1)}(\tau,z) = c_0^p |\tau-b|^p - |z_1|^p - e^{(p-1)^2/(2-p)} [|z_2|^p + |z_3|^p]$, with (four-dimensional) gradient
\begin{multline*}
  \nabla_{t,x} \phi^{(1)} (\tau,z) 
  = p \Big(c_0^p |\tau-b|^{p-1} \mathrm{sgn}(\tau-b), \, -|z_1|^{p-1} \mathrm{sgn}(z_1), \\ 
  -e^{\frac{(p-1)^2}{2-p}} |z_2|^{p-1} \mathrm{sgn}(z_2), \, -e^{\frac{(p-1)^2}{2-p}} |z_3|^{p-1} \mathrm{sgn}(z_3) \Big)^\top. 
\end{multline*}
To show that $\Sigma_{p,b}^{(1)}$ is everywhere non-characteristic for $1<p<2$, we hence need to show that for all $(\tau,z) \in \Sigma_{p,b}^{(1)}$ there holds $(\partial_t \phi)^2 \not = c_0^2 (\nabla_x \phi)^2$, that is, 
\[
  c_0^{2p} |\tau-b|^{2p-2} 
  \not =
  c_0^2 \left[ |z_1|^{2p-2} + e^{\frac{2(p-1)^2}{2-p}} \big[|z_2|^{2p-2} + |z_3|^{2p-2} \big]\right].
\]
The latter relation is indeed satisfied for $1<p<2$, as the bounds $e^{p(p-1)/(4-2p)} - e^{(p-1)^2/(2-p)}> 0$ (since $(p-1)^2/(2-p)>0$ and $p/(2p-2) > 1$) and $3^{-1+2/p}>1$ imply that  
\begin{align*}
  |z_1|^{2p-2} + & e^{\frac{2(p-1)^2}{2-p}} [|z_2|^{2p-2} + |z_3|^{2p-2} ] 
  \geq 3^{-1+2/p} \Big[ |z_1|^{p} + e^{\frac{p(p-1)}{2(2-p)}} [|z_2|^{p} + |z_3|^{p}] \Big]^{2-2/p} \\
  & \stackrel{\eqref{eq:S1}}{=} 3^{-1+2/p} \Big[ c_0^p |\tau-b|^p + \Big(e^{\frac{p(p-1)}{4-2p}}-e^{\frac{(p-1)^2}{2-p}} \Big) [|z_2|^{p} + |z_3|^{p}] \Big]^{2-2/p} \\ 
  & > 3^{-1+2/p} \big[ c_0^p |\tau-b|^p \big]^{2-2/p}
   > c_0^{2p-2} |\tau-b|^{2p-2}. 
\end{align*}
As $b\to 0$ and $1<p \to 1$, we further note that $\Sigma_{p,b}^{(1)}$ tends in the Hausdorff distance to the characteristic one-dimensional set 
\[
  \Sigma_{1,0}^{(1)} = \{ (\tau,z)\in\R^4: \, c_0 |\tau| =  |z_1| , \, z_2=z_3 = 0 \text{ and } \tau \geq 0 \}, 
\]
which is the boundary of the closed sector $\cone (0,\xi^{(1)},\xi^{(2)}) = \{ \alpha \xi: \, \alpha \geq 0, \, \xi \in \gamma(\xi^{(1)},\xi^{(2)}) \}$.
By definition of $\ch(K)$ we conclude that $\Sigma_{1,0}^{(1)} \cap K = \emptyset$. 
As $K$ is compact and as $\Sigma_{1,0}^{(1)}$ is closed, there are hence $p_0>1$ and $b_0>0$ such that $\Sigma_{p,b}^{(1)} \cap K = \emptyset$, too, for all $p\in (1,p_0)$ and $b \in (0,b_0)$. 
This implies that any shifted surfaces $\Sigma_{p,b+s}^{(1)}$ for $p\in (1,p_0), \, b \in (0,b_0)$ and $s\geq 0$ also do not intersect $K$.
Since $v$ vanishes outside the convex hull of $K$, and as all surfaces $\Sigma_{p,b}^{(1)}$ are everywhere non-characteristic, we conclude that $v$ vanishes in the union $\cup_{0<p<p_0,0<b<\infty} \Sigma_{p,b}^{(1)}$, which is the (relative, two-dimensional) interior of the two-dimensional sector $\cone(0,\xi^{(1)},\xi^{(2)})$. 

\item If $\xi^{(1)}_{\mathrm{t}}=-\xi^{(2)}_{\mathrm{t}}$, then $\{ (t,x) + \alpha \xi: \, \alpha \geq 0, \, \xi \in \gamma(\xi^{(1)},\xi^{(2)}) \}$ is a sector in $\R^4$ that points into the direction $(0,\xi^{(1,2)}_1)$. 
By shifting the coordinate system in $\R^4$ and rotating it with respect to the space variable, we can, as in the last part, assume that $(t,x) = (0,0,0,0)^\top$ and that $\xi^{(1,2)}_{\mathrm{x}} /|\xi^{(1,2)}_{\mathrm{x}}| = (1,0,0)^\top$.

Consider for $p \in [1,2)$ and $b \geq 0$ the surfaces 
\begin{equation}\label{eq:S2}
  \Sigma_{p,b}^{(2)} := \Big\{ (\tau,z)\in\R^4: \, c_0^p |\tau|^p = |z_1-b|^p + e^{\frac{(p-1)^2}{2-p}} [|z_2|^p + |z_3|^p] \text{ and } z_1 \geq b \Big\}.
\end{equation}
These surfaces are characterized by 
\[
  \phi^{(2)}(\tau,z) = c_0^p |\tau|^p - \Big[ |z_1-b|^p + e^{\frac{(p-1)^2}{2-p}} [|z_2|^p + |z_3|^p] \Big],  
\] 
with (four-dimensional) gradient 
\begin{multline*} 
\nabla_{t,x} \phi^{(2)} (\tau,z) = p \Big(c_0^p |\tau|^{p-1} \mathrm{sgn}(\tau-t), \, -|z_1-b|^{p-1} \mathrm{sgn}(z_1-b), \, \\
 -e^{\frac{(p-1)^2}{2-p}} |z_2|^{p-1} \mathrm{sgn}(z_2), \, -e^{\frac{(p-1)^2}{2-p}} |z_3|^{p-1} \mathrm{sgn}(z_3) \Big)^\top.
\end{multline*}
As in the first part, we show that $\Sigma_{p,b}$ is everywhere non-characteristic for $1<p<2$ by proving that for all $(\tau,z) \in \Sigma_{p,b}^{(2)}$ there holds 
\[
  c_0^{2p} |\tau|^{2p-2} 
  \not =
  c_0^{2} \left[| z_1-b|^{2p-2} + e^{\frac{2(p-1)^2}{2-p}} \big[|z_2|^{2p-2} + |z_3|^{2p-2} \big] \right].
\]
Indeed, for $p\in (1,2)$ the inequalities $e^{p(p-1)/(4-2p)} - e^{(p-1)^2/(2-p)} > 0$ (as $(p-1)^2/(2-p)>0$ and $p/(2p-2)>1$) and $3^{-1+2/p}>1$ imply for $(\tau,z) \in \Sigma_{p,b}^{(2)}$ that  
\begin{align}
  |z_1-b |^{2p-2} &+  e^{\frac{2(p-1)^2}{2-p}} \big[|z_2|^{2p-2} + |z_3|^{2p-2}\big] \nonumber \\
  & \geq 3^{-1+2/p} \left[ |z_1-b|^p + e^{\frac{p(p-1)}{4-2p}} [|z_2|^p + |z_3|^p]  \right]^{2-2/p} \nonumber \\
  & \stackrel{\eqref{eq:S2}}{=} 3^{-1+2/p} \left[ c_0^p |\tau|^p +  \Big(e^{\frac{p(p-1)}{4-2p}} - e^{\frac{(p-1)^2}{{2-p}}} \Big) [|z_2|^p + |z_3|^p]\right]^{2-2/p}\nonumber\\
  & > 3^{-1+2/p} \left[ c_0^p |\tau|^p \right]^{(2p-2)/p} > c_0^{2p-2} |\tau|^{2p-2}, \label{eq:aux317}
\end{align}
such that $\Sigma_{p,b}^{(2)}$ is non-characteristic. 
For $1<p \to 1$ and $b\to 0$, there further holds that $\Sigma_{p,b}^{(2)}$ tends in the Hausdorff distance to the one-dimensional set
\[
  \Sigma_{1,0}^{(2)} = \{ (\tau,z)\in\R^4: \, c_0 |\tau| = |z_1|, \ z_2 = z_3 = 0 \text{ and } z_1 \geq 0 \}. 
\]
Again, $\Sigma_{1,0}^{(2)}$ is the boundary of the closed sector $\cone(0,\xi^{(1)},\xi^{(2)}) = \{ \alpha \xi: \, \alpha \geq 0, \, \xi \in \gamma(\xi^{(1)},\xi^{(2)}) \}$ that is by assumption outside $\ch(K)$. 
Thus, $\Sigma_{1,0}^{(2)} \cap K = \emptyset$. 
As $K$ is compact and as $\Sigma_{1,0}^{(1)}$ is closed, there are hence $p_0>1$ and $b_0>0$ such that $\Sigma_{p,b} \cap K = \emptyset$, too, for all $p\in (0,p_0)$ and $b \in (0,b_0)$. 
This implies that any shifted surface $\Sigma_{p,b}$ for $p\in (0,p_0)$ and $b \geq 0$ also does not intersect $K$.
Since $v$ vanishes outside the convex hull of $K$, and as all surfaces $\Sigma_{p,b}$ are everywhere non-characteristic, we conclude that $v$ vanishes in the union $\cup_{0<p<p_0,0<b<\infty} \Sigma_{p,b}$. 
This union is the relative two-dimensional interior of $\cone(0,\xi^{(1)},\xi^{(2)})$.
\end{itemize}
We have hence shown that for each point $(t,x)$ in the open set $\R^4 \setminus \ch(K)$ the wave field $v$ vanishes in the relative interior of the two-dimensional sector 
\[
  \cone((t,x),\xi^{(1)},\xi^{(2)}) 
  = \big\{ (t,x)+\alpha \xi: \, \alpha \geq 0, \, \xi \in \gamma(\xi^{(1)},\xi^{(2)}) \big\}
\]
from the definition of $\ch(K)$ in~\eqref{eq:cc}. 
As $\R^4 \setminus \ch(K)$ is open, we conclude that $v$ vanishes in all of $\R^4 \setminus \ch(K)$. 
\end{proof}

\begin{corollary}\label{th:cor1}
Assume that $N f_1 = N f_2$ on $(0,T+\diam(\Omega)/c_0)$ for two sources $f_{1,2} \in \E'((0,T) \times \Omega)$ such. 
Then the associated waves $v_{1,2} =Vf_{1,2} \in \D'(\R \times \R^3)$ satisfy $v_1=v_2$ in $\R \times \R^3 \setminus \ch(\supp(f_1-f_2))$.
\end{corollary}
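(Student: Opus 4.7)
The plan is to reduce this to Theorem~\ref{th:ucp2} by exploiting linearity of both the near field operator $N$ and the retarded volume potential $V$.

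First I would set $f := f_1 - f_2$, which lies in $\E'((0,T)\times\Omega)$ because $\E'((0,T)\times\Omega)$ is a vector space. Linearity of $V$ (it is a convolution with the causal fundamental solution, so additive on distributional sources) gives $Vf = Vf_1 - Vf_2 = v_1 - v_2$ as elements of $\D'(\R\times\R^3)$. Likewise, linearity of the restriction operator defining $N$ in Lemma~\ref{th:mapN} gives $Nf = Nf_1 - Nf_2$, which vanishes on $(0, T+\diam(\Omega)/c_0) \times \partial\Omega$ by hypothesis.

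Next I would apply Theorem~\ref{th:ucp2} directly to this $f$: with $K := \supp(f) = \supp(f_1-f_2)$, the theorem asserts that $Vf$ vanishes on $(\R\times\R^3)\setminus \ch(K)$. Substituting $Vf = v_1 - v_2$ yields $v_1 = v_2$ on $(\R\times\R^3)\setminus\ch(\supp(f_1-f_2))$, which is the claim.

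There is no genuine obstacle here; the only thing worth double-checking is that $\supp(f_1 - f_2) \subset \supp(f_1) \cup \supp(f_2) \subset (0,T)\times\Omega$, so that $f_1 - f_2$ is a legitimate input to Theorem~\ref{th:ucp2}, and that linearity of $V$ on compactly supported distributions is unambiguous (which follows from the representation~\eqref{eq:rep0} or, equivalently, from the continuity statement of Lemma~\ref{th:V}). The proof should therefore be only a few lines long.
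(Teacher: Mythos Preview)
Your proposal is correct and matches the paper's intended approach: the corollary is stated without proof precisely because it follows immediately from Theorem~\ref{th:ucp2} by the linearity reduction you describe (the paper already remarks before Proposition~\ref{th:ucp} that ``linearity of the wave equation implies that we merely need to consider the case of vanishing data''). Your checks on $\supp(f_1-f_2)\subset (0,T)\times\Omega$ and linearity of $V$ and $N$ are the only details needed, and they are unproblematic.
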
 

As any argument based on unique continuation, the proofs in this subsection fail to be constructive and do not explicitly motivate (fast) algorithms computing, e.\,g., the set $\ch(\supp(f))$ from near field data of $v=Vf$. 
(Of course, the well-known Picard's criterion might be used to distinguish whether a source in the closure of the range of $N$ effectively belongs to that range, but this requires in practice to compute a singular value decomposition of a discretization of $N$, which is not a fast procedure.)  

\section{Far field data and supports for sources}\label{se:far}
In this section we consider far field data of the time-dependent waves generated by distributional sources and show how to determine a set that supports a source generating a given far field pattern and is additionally, roughly speaking, minimal.  
We already know that a source distribution $f  \in \E'(\R\times\R^3)$ with compact support leads to a causal distributional solution to the source problem 
\begin{equation}\label{eq:waveEqC0-2}
\begin{split}
  & \ddot{v} - c_0^2 \Delta v = f \qquad \text{in } \R \times \R^3
\end{split}
\end{equation}
that vanishes in $\{ (t,x)\in \R\times\R^3: \, t<T\}$ for some $T=T(f) \in \R$ and has an explicit representation in form of a retarded volume potential, see~\eqref{eq:rep0}: 
If $\supp(f) \subset \R \times B_R$ for some $R>0$, then 
\begin{equation}\label{eq:rep-2}
  v(t,x) = \int_{B_R} \frac{f(t-|x-y|/c_0, \, y)}{4\pi |x-y|} \d{y}, \qquad (t,x) \in \R \times \R^3.
\end{equation}
At points $x = r \hat{x}$ with $R<r \to\infty$ and $\hat{x} \in \S^2=\{ |x|=1\}$, $v$ formally behaves as 
\begin{align*}
  v(\tau +r/c_0, r\hat{x})
  & = \int_{B_R} \frac{f(\tau + r/c_0 - |r \hat{x}-y|/c_0, \, y)}{4\pi |r\hat{x}-y|} \d{y} \\
  & = \int_{B_R} \frac{f(\tau-\hat{x}\cdot y/c_0, \, y)}{4\pi (r - \hat{x}\cdot y)} \left(1+ \mathcal{O}\left( \frac{1}{r}\right)\right) \d{y} \\
  & = \frac{1}{4\pi \, r} \int_{B_R} \frac{f(\tau + \hat{x}\cdot y/c_0, \, y)}{1+ \hat{x}\cdot y/r} \d{y} \left(1+ \mathcal{O}\left( \frac{1}{r}\right)\right), \qquad \tau \in \R,
\end{align*}
since $\big| \, |r \hat{x}-y| - r + \hat{x} \cdot y \big| \leq 2R^2/(r-R)$. 
The rescaled limit as $r \to\infty$ of the latter expression is called the far field $v^\infty$ of $v$, 
\begin{equation}\label{eq:farField}
  \lim_{r\to\infty} \left[ 4\pi r \,  v(\tau +r/c_0, r\hat{x}) \right] 
  = \int_{B_R} f(\tau + \hat{x}\cdot y/c_0, \, y) \d{y} 
  =: v^\infty(\tau,\hat{x}),
  \quad (\tau,\hat{x}) \in \R \times \S^2.
\end{equation}
Thus, the far field $v^\infty$ at $(\tau,\hat{x})$ formally is the integral of the source $f$ over a characteristic hyperplane through $(\tau,0)$ with normal vector $(c_0,\hat{x})$. 
(The expression in~\eqref{eq:farField} is at least well-defined if $f$ is integrable on $\R^4$.)
Similar to our notation for near field data, we denote the operator mapping sources $f$ with compact support in $\R \times \R^3$ to the far field $v^\infty$ by $F: \, f \mapsto v^\infty$. 

\begin{lemma} 
  For $f \in \E'(\R \times \R^3)$ with compact support in $\R \times B_R$ for some $R=R(f)>0$, its retarded volume potential $v = Vf \in \mathcal{D'}(\R \times \R^3)$ is an expanding wave. 
  For $\hat{x} \in \S^2$, $\hat{x} \mapsto r v(\tau + r, r \hat{x})$ tends to a limit $v^\infty(\tau,\hat{x})$ in the weak-$^\ast$ topology of $\D'(\S^2)$ and defines a continuous operator 
  \[
    F: \, \E'(\R \times \R^3) \to \E'(\R; \D'(\S^2)),
    \qquad 
    f \mapsto v^\infty . 
  \]
  The mapping $\chi \mapsto v^\infty_\chi := \langle v^\infty(\cdot, \hat{x}), \chi \rangle_{\D'(\R) \times \D(\R)}$ is continuous from $\D(\R)$ into $\D(\S^2)$. 
\end{lemma}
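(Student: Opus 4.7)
The first assertion that $v=Vf$ is an expanding wave is Lemma~\ref{th:V}. For the remaining claims, my plan is to directly \emph{define} $v^\infty$ as a distribution on $\R\times\S^2$ by the pairing suggested by the formal computation preceding the lemma, verify the easy continuity, support and regularity statements from that definition, and only at the end interpret $r\,v(\tau+r/c_0,r\hat{x})$ as a pull-back distribution and prove its weak-$^\ast$ convergence to $v^\infty$.

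For $\chi\in\D(\R)$ and $\psi\in\D(\S^2)$, I introduce
\[
\Phi_{\chi,\psi}(s,y):=\frac{1}{4\pi}\int_{\S^2}\chi(s-\hat{x}\cdot y/c_0)\,\psi(\hat{x})\,\d{\hat{x}},\qquad (s,y)\in\R\times\R^3,
\]
which belongs to $\E(\R\times\R^3)$ by differentiation under the integral and depends continuously on $(\chi,\psi)\in\D(\R)\times\D(\S^2)$ in the $\E$-topology. Setting
\[
\langle v^\infty,\chi\otimes\psi\rangle:=\langle f,\Phi_{\chi,\psi}\rangle_{\E'(\R\times\R^3)\times\E(\R\times\R^3)}
\]
yields a separately continuous bilinear form, hence an element $v^\infty\in\D'(\R\times\S^2)$, and continuity of $F:f\mapsto v^\infty$ from $\E'(\R\times\R^3)$ into $\D'(\R\times\S^2)$. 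If $\supp(f)\subset[a,b]\times B_R$, then $\Phi_{\chi,\psi}$ vanishes on $\supp(f)$ whenever $\supp\chi$ is disjoint from $[a-R/c_0,b+R/c_0]$, placing $v^\infty$ in $\E'(\R;\D'(\S^2))$. The final continuity claim comes cheapest: the explicit identity $v^\infty_\chi(\hat{x})=(4\pi)^{-1}\langle f(s,y),\chi(s-\hat{x}\cdot y/c_0)\rangle$ exhibits $v^\infty_\chi$ as a smooth function of $\hat{x}\in\S^2$ that depends continuously on $\chi\in\D(\R)$, so $\chi\mapsto v^\infty_\chi$ maps continuously into $\D(\S^2)=C^\infty(\S^2)$.

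To establish the weak-$^\ast$ convergence, I interpret $r\,v(\tau+r/c_0,r\hat{x})$ via pull-back. For $r$ exceeding the spatial radius of $\supp(f)$, the smooth map $\phi_r(\tau,\hat{x}):=(\tau+r/c_0,r\hat{x})$ has image $\R\times\partial B_r$, disjoint from $\supp(f)$; its conormal set consists of purely spatial covectors $(0,\alpha\hat{x})$, none of which satisfy $c_0^2\xi_{\mathrm{t}}^2=|\xi_{\mathrm{x}}|^2$ and which are therefore non-characteristic for the wave equation. Since $\mathrm{WF}(v)$ is contained in the characteristic cone off $\supp(f)$ (as in the proof of Lemma~\ref{th:mapN}), H\"ormander's pull-back theorem produces a well-defined $g_r:=r\,\phi_r^\ast v\in\D'(\R\times\S^2)$. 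Substituting~\eqref{eq:rep-2} and Fubini (legitimate first for smooth $f$ and then extended to $f\in\E'(\R\times\R^3)$ by density together with continuity of $V$ and of the pull-back) gives
\[
\langle g_r,\chi\otimes\psi\rangle=\langle f,T_r\rangle,\quad T_r(s,y):=\int_{\S^2}\frac{r\,\chi(s-r/c_0+|r\hat{x}-y|/c_0)\,\psi(\hat{x})}{4\pi|r\hat{x}-y|}\d{\hat{x}}.
\]
The whole task then reduces to showing $T_r\to\Phi_{\chi,\psi}$ in $\E(\R\times\R^3)$ as $r\to\infty$, for then pairing with $f\in\E'(\R\times\R^3)$ immediately yields $\langle g_r,\chi\otimes\psi\rangle\to\langle v^\infty,\chi\otimes\psi\rangle$.

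The main obstacle is precisely this last step, which demands uniform convergence on compact sets together with \emph{every} partial derivative in $(s,y)$. Pointwise convergence rests on the expansion $|r\hat{x}-y|=r-\hat{x}\cdot y+\mathcal{O}(1/r)$ uniformly for $y$ in any fixed compact set, exactly as in the formal derivation preceding the lemma. Each $y$-derivative of $T_r$ produces, via the chain rule, either a factor $(y-r\hat{x})/|r\hat{x}-y|$ (which tends to $-\hat{x}$ uniformly on compact sets) or a higher derivative of $\chi$ whose shifted argument converges to $s-\hat{x}\cdot y/c_0$; confining $y$ to a compact set guarantees $|r\hat{x}-y|\geq r-|y|$ is bounded below for large $r$, so no singular denominators ever arise, and a Fa\`a di Bruno bookkeeping yields the required $C^k$-convergence on compacta for every $k$.
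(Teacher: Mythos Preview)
Your argument is correct and, for the continuity of $F$, essentially coincides with the paper's: your test function $\Phi_{\chi,\psi}$ is precisely (up to the $4\pi$ normalisation) the paper's transpose $F^T(\chi\otimes\psi)(t,y)=\int_{\S^2}\chi(t-\hat{x}\cdot y/c_0)\psi(\hat{x})\d{\hat{x}}$, and both proofs deduce continuity of $F$ by showing this transpose maps $\D(\R\times\S^2)$ continuously into $\E(\R\times\R^3)$.

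Where you genuinely diverge is on the remaining two claims. The paper does not prove the weak-$^\ast$ convergence of $r\,v(\tau+r/c_0,r\hat{x})$ or the continuity of $\chi\mapsto v^\infty_\chi$ at all; it simply cites \cite[Lemma~2.1]{Fried1973} for both. You instead supply a self-contained proof: the pull-back $g_r=r\,\phi_r^\ast v$ is justified exactly as the paper justifies the trace operator in Lemma~\ref{th:mapN} (conormals to $\R\times\partial B_r$ are purely spatial, hence miss the characteristic cone containing $\mathrm{WF}(v)$), and the convergence is then reduced to the elementary $C^\infty$-on-compacta statement $T_r\to\Phi_{\chi,\psi}$, which follows from $|r\hat{x}-y|-r+\hat{x}\cdot y=\mathcal{O}(1/r)$ together with the chain-rule bookkeeping you outline. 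This buys you independence from Friedlander's paper and makes the mechanism of convergence explicit; the paper's route buys brevity. The density-plus-continuity step you use to pass from smooth $f$ to $f\in\E'(\R\times\R^3)$ is legitimate because, as in Lemma~\ref{th:mapN}, $f\mapsto g_r$ is continuous from $\E'(\R\times\R^3)$ into $\D'(\R\times\S^2)$.
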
 
\begin{proof} 
The first claim has already been shown in Lemma~\ref{th:V} and convergence of $\hat{x} \mapsto r v(\tau + r, r \hat{x})$ to $v^\infty(\tau,\hat{x})$ in $\D'(\S^2)$ in shown in~\cite[Lemma 2.1]{Fried1973}. 
The last claim concerning continuity of $\chi \mapsto v^\infty(\chi)$ follows from that reference as well, since the proof of~\cite[Lemma 2.1]{Fried1973} shows that for all $\ell\in\N$ there are constants $C=C(v^\infty,\ell)>0$ and $M_\ell \in \N_0$ such that $\| v^\infty(\chi) \|_{C^\ell(\S^2)} \leq C \| \chi \|_{C^{\ell+M_\ell}(\R)}$ holds for all $\chi \in \D(\R)$. 

It remains to show continuity of $F$ from $\E'(\R \times \R^3)$ into $\E'(\R; \D'(\S^2))$. 
As usual, we will to this end show that the transpose $F^T$, characterized by $\int_\R \int_{\S^2} Ff\, g \d{\hat{x}} \hspace*{-3pt} \d{\tau} = \int_\R \int_{\R^3} f\, F^T g \d{\hat{x}} \hspace*{-3pt} \d{\tau}$ for all $f \in \D(\R\times\R^3)$ and $g \in \D(\R; \D(\S^2))$, such that 
\[
  (F^T g)(t,y) = \int_{\S^2} g(t - \hat{x} \cdot y / c_0, \hat{x}) \dS{(\hat{x})} \qquad (t,y)\in\R\times\R^3. 
\]
Smoothness of the compactly supported function $g$ allows to exchange partial derivatives in $t$ and $y$ of any order with the latter parameter integral. 
For arbitrary $\ell\in\N$ there are hence constants $C=C(\ell)>0$ and $M_\ell \in \N_0$ such that 
\[
  \| F^T g \|_{C^\ell(\R\times \R^3)} \leq C \| g \|_{C^{\ell+M_\ell}(\R\times\S^2)}
  \qquad \text{for all  } g \in \D(\R; \D(\S^2)),
\]
which shows that continuity of $F^T$ between $\D(\R; \D(\S^2))$ and $\D(\R\times\R^3)$.
Consequently, $F$ is continuous from $\D'(\R\times\R^3)$ into $\D'(\R; \D'(\S^2))$ and in particular between the subspaces $\E'(\R\times\R^3)$ and $\E'(\R; \D'(\S^2))$: 
If $\{ f_n \}_{n\in\N}$ and $f$ belong to $\D'(\R\times\R^3)$ such that $f_n \to f$ as $n\to\infty$ in $\D'(\R\times\R^3)$, then (omitting to explicitly denote the duality products)
\[
  \langle F f_n, g \rangle
  = \langle f_n, F^\ast g \rangle
  \stackrel{n\to\infty}{\longrightarrow} 
  \langle f, F^\ast g \rangle
  = \langle F f, g \rangle
  \qquad \text{for all } g \in \D(\R;\D(\S^2)). 
\]
\end{proof}



In analogy to near field data, we say that a source $f$ generates far field data $g$ if $g=Ff$, i.e., if $g = v^\infty$ for $v=Vf$. 
In analogy to the unique continuation results for near field data in Proposition~\ref{th:ucp}, Theorem~\ref{th:ucp2}, and Corollary~\ref{th:cor1}, far field wave data in general do not uniquely determine the wave field outside the support of the source generating  the measured far field. 
Such a unique determination property again requires particular assumptions on the support of the source. 

\begin{theorem}\label{th:ucpFar}
Assume that $Ff= 0$ on $\R \times \S^2$ for some $f \in \E'(\R\times\R^3)$ supported in $K = \supp(f)$. 
Then $v = Vf$ vanishes in $(\R\times\R^3) \setminus \ch(K)$.  
If, additionally, $K = \ch(K)$, then $v$ vanishes $(\R\times\R^3) \setminus \supp(f)$. 
\end{theorem}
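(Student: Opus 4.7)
The plan is to reduce to the time-harmonic setting by Fourier transform in time and then invoke verbatim the polytope-approximation and characteristic-hull constructions already developed for the near-field case.

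First, I would translate the hypothesis $Ff=0$ into a time-harmonic statement. Formula~\eqref{eq:farField} together with a computation completely analogous to the one leading to~\eqref{eq:fourier} shows that the time-Fourier transform $\widehat{v^\infty}(k,\hat{x})$ coincides, up to a fixed multiplicative constant, with the classical time-harmonic far field of the Helmholtz solution $\hat{v}(k,\cdot)$; more precisely,
\[
  \widehat{v^\infty}(k,\hat{x}) \;=\; \frac{1}{\sqrt{2\pi}} \int_{\R^3} e^{\i k\hat{x}\cdot y/c_0}\, \hat{f}(k,y)\d{y}, \qquad k\in\R,\ \hat{x}\in\S^2,
\]
which is the standard expression for the far field of the radiating Helmholtz volume potential appearing in~\eqref{eq:fourier}. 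Hence $Ff=0$ in $\E'(\R;\D'(\S^2))$ implies that $\hat{v}(k,\cdot)$ has zero far field on $\S^2$ for every $k\in\R$.

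Next, I would apply Rellich's lemma and analytic continuation in the time-harmonic picture. For each $k\neq 0$, the distribution $\hat{v}(k,\cdot)$ is a radiating solution of the Helmholtz equation~\eqref{eq:he} in $\R^3\setminus\PR(K)$ and, by Weyl's lemma, is real-analytic there. Since its far field vanishes on $\S^2$, Rellich's lemma~\cite{Colto1998} forces $\hat{v}(k,\cdot)$ to vanish on the unbounded connected component $U$ of $\R^3\setminus\PR(K)$. Continuity of $k\mapsto\hat{v}(k,\cdot)$ extends this to $k=0$, and inverting the Fourier transform in time yields that $v$ vanishes on the full space-time cylinder $\R\times U$.

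Finally, with this initial region of vanishing in place, I would run the polytope-approximation argument from parts~(3)--(5) of the proof of Proposition~\ref{th:ucp} to propagate vanishing of $v$ to $(\R\times\R^3)\setminus\conv(K)$, and then the non-characteristic surface argument using $\Sigma_{p,b}^{(1)}$ and $\Sigma_{p,b}^{(2)}$ from the proof of Theorem~\ref{th:ucp2} to refine this to $(\R\times\R^3)\setminus\ch(K)$. The second conclusion, that $v$ vanishes outside $\supp(f)$ whenever $K=\ch(K)$, is then immediate. Both arguments depend only on $v$ being a causal expanding wave that vanishes on an open subset of space-time reaching spatial infinity, which is exactly the output of the Fourier--Rellich step.

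The main obstacle I expect is making the first step rigorous in the distributional setting: the far-field limit defining $F$ and the time Fourier transform both involve passing to limits, and one has to verify that these operations commute for general $f\in\E'(\R\times\R^3)$ rather than only for smooth, compactly supported $f$. This can be handled by density together with the continuity of $F$ proved in the lemma preceding this theorem, the continuity of the time-harmonic volume potential from $\E'(\R^3)$ into $\D'(\R^3)$ noted after~\eqref{eq:fourier}, and continuity of the Fourier transform on $\mathcal{S}'(\R)$, but the associated bookkeeping is what will demand the most care.
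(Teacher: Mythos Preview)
Your proposal is correct and follows essentially the same mathematical route as the paper, but with a different organization. The paper does not redo the Fourier/Rellich computation and the polytope and $\Sigma_{p,b}^{(j)}$ constructions; instead it (i) cites Theorem~3.1 of~\cite{Fried1973} to conclude directly that $v$ vanishes for $|x|>R$ once $K\subset[-R,R]\times B_R$, (ii) combines this with causality and the representation~\eqref{eq:rep-2} to bound $\supp(v)$ in time as well, and then (iii) picks any smooth closed surface $\Gamma$ enclosing $B_R$, observes that $v|_{\R\times\Gamma}=0$, and applies the already-proved near-field Theorem~\ref{th:ucp2} verbatim with $\Omega$ the region enclosed by $\Gamma$. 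Your Fourier/Rellich step is precisely an explicit proof of what Friedlander's theorem supplies, and your direct invocation of the techniques from Proposition~\ref{th:ucp} and Theorem~\ref{th:ucp2} is what the paper gets for free by reducing to the near-field statement; the paper's packaging is shorter, while yours is more self-contained and avoids the external citation.
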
 
\begin{proof}
Choose $R>0$ such that $K \subset [-R,R] \times B_R$. 
Theorem 3.1 in~\cite{Fried1973} (see also Theorem~\ref{th:ucpFar} below) then shows that the wave $v=Vf$ with far field $v^\infty = Ff$ vanishes in $\{ (t,x) \in \R\times\R^3 : \, |x|>R \}$. 
By causality of $v$, this wave field $v(t,\cdot)$ further must vanish for times $t<-R$. 
The representation of $v$ in~\eqref{eq:rep-2} moreover shows that the support of $v$ cannot contain points $(t,x)$ with $x \in B_R$ and $t > (1 + 2/c_0) R$. 
Hence, $\supp(v)$ is contained in $[-R, (1 + 2/c_0) R] \times B_R$. 
For any smooth manifold $\Gamma\subset\R^3$ enclosing $B_R$, the restriction of $v$ to $\R\times \Gamma$ hence vanishes and an application of Theorem~\ref{th:ucp2} yields the claim.  
\end{proof}

\begin{remark}
The far field formula~\eqref{eq:farField} shows that $(Ff)(\tau,\hat{x})$ equals the three-dimensional Radon transform of the distribution $f \in \E'(\R\times\R^3)$ along the characteristic hyperplanes $\partial E(\hat{x},\tau) = \{ (t,x)^\top \cdot (1,\hat{x}/c_0)^\top = \tau \} \subset \R^4$.   
In the literature on the Radon transform there seems to be no result on whether that amount of partial data is sufficient to (partially) determine $f$. 
\end{remark}

\subsection{Source supports}
For far field data $v^\infty$ generated by an expanding wave, there is an simple and explicit criterion due to Friedlander, see~\cite{Fried1973}, whether a given far field can be generated by a source whose support is contained in the (bi-)conical set 
\begin{equation}\label{eq:KR}
 K_R := \{ (t,x) \in \R \times \R^3: \, |t| + |x|/c_0 \leq R \} .
\end{equation}

\begin{theorem}[Friedlander]\label{th:friedlander}
  Assume that $g \in \D'(\R; \D'(\S^2))$ defines a continuous map $\chi \mapsto g_\chi =: \langle g(\cdot, \hat{x}), \chi \rangle_{\D'(\R) \times \D(\R)}$ from $\D(\R)$ into $\D(\S^2)$. 
  Then $g = Ff$ for $f \in \E'(\R \times \R^3)$ with $\supp(f) \subset K_R$ for $R \geq 0$ if and only if (1) $\supp(g) \subset \{ (\tau,\hat{x})\in \R\times\S^2: \, |\tau| < R \}$ and (2) $\hat{x} \mapsto \langle g(\cdot,\hat{x}), \tau \mapsto \tau^\ell \rangle_{\E'(\R)\times\E(\R)}$ is for $\ell \in\N$ a polynomial in $\hat{x}$ of degree less than or equal to $\ell$.
\end{theorem}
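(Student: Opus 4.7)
The strategy is to prove necessity by direct computation from the far-field formula~\eqref{eq:farField}, and sufficiency by a Paley--Wiener/extension argument in the Fourier domain. For \textbf{necessity}, I interpret~\eqref{eq:farField} distributionally via the pairing
\[
  \langle g(\cdot,\hat x),\chi\rangle = \bigl\langle f,\, (s,y)\mapsto \chi(s-\hat x\cdot y/c_0)\bigr\rangle_{\E'(\R\times\R^3)\times \E(\R\times\R^3)}.
\]
Condition (1) then follows from a support argument: if $(s,y)\in\supp f\subset K_R$, then $|s-\hat x\cdot y/c_0|\le|s|+|y|/c_0\le R$, so $\supp g(\cdot,\hat x)\subset\{|\tau|\le R\}$, and a small additional argument promotes this to strict inequality using the compactness of $\supp f$ inside the closed set $K_R$. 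Condition (2) follows by substituting $\chi(\tau)=\tau^\ell$ (well-defined since $g(\cdot,\hat x)$ has compact support) and expanding by the binomial theorem:
\[
  \langle g(\cdot,\hat x),\tau^\ell\rangle = \sum_{k=0}^\ell \binom{\ell}{k}(-1/c_0)^k\,\bigl\langle f,\, s^{\ell-k}(\hat x\cdot y)^k\bigr\rangle,
\]
which is manifestly a polynomial of degree $\le\ell$ in $\hat x$ since $(\hat x\cdot y)^k$ is a polynomial of degree $k$ in $\hat x$ for every fixed $y$.

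For \textbf{sufficiency}, let $\hat g(k,\hat x)$ denote the temporal Fourier transform of $g$. By (1) and Paley--Wiener--Schwartz in $\tau$, $\hat g(\cdot,\hat x)$ extends to an entire function in $k$ of exponential type $\le R$, with Taylor expansion
\[
  \hat g(k,\hat x) = \frac{1}{\sqrt{2\pi}}\sum_{\ell\ge 0}\frac{(-\i k)^\ell}{\ell!}\langle g(\cdot,\hat x),\tau^\ell\rangle,
\]
whose $\ell$-th coefficient, by (2), is a polynomial in $\hat x$ of degree $\le\ell$. Recalling from Section~\ref{se:waves} that any candidate source $f$ must satisfy $\hat g(k,\hat x)=(2\pi)^{3/2}\tilde f(k,-k\hat x/c_0)$ on the characteristic cone $\{(\omega,\xi)\in\R^4:\omega^2=c_0^2|\xi|^2\}$ (with $\tilde f$ the 4D Fourier transform of $f$), the task reduces to extending $\hat g$ from this cone to an entire function $F(\omega,\xi)$ on $\C^4$ of appropriate exponential type, and then setting $f:=\mathcal{F}^{-1}F$. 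Condition (2) is exactly what enables a canonical term-by-term extension of the Taylor series: each monomial $k^\ell\hat x^\alpha$ with $|\alpha|\le\ell$ lifts to $\omega^{\ell-|\alpha|}(-c_0\xi)^{\alpha}$, a polynomial in $(\omega,\xi)$ of total degree $\ell$; the remaining ambiguity is a multiple of $\omega^2-c_0^2|\xi|^2$, which corresponds under inverse Fourier transform to adding a silent source $\Box h$ and hence does not affect $Ff$.

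The main obstacle will be the quantitative growth estimate on $F$: one must promote the formal term-by-term extension into a globally well-defined entire function on $\C^4$ of exponential type controlled by the supporting functional of $K_R$, so that Paley--Wiener--Schwartz in four variables yields $\supp f\subset K_R$. This bookkeeping on exponential type, combining (1) in the $\omega$-direction with (2) in the $\xi$-direction, is the essential technical content of Theorem~3.1 in Friedlander~\cite{Fried1973}, and I would invoke that reference for the delicate growth estimates rather than redoing them from scratch. Once $F$ is in hand and $f$ is defined by inverse Fourier transform, the identity $Ff=g$ is automatic by construction of the extension on the characteristic cone, completing the argument.
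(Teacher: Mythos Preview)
Your proposal is correct and follows essentially the same route as the paper: both prove necessity by direct computation from the far-field pairing (the paper via a change of variables in~\eqref{eq:aux409}, you via the equivalent binomial expansion), and both delegate sufficiency to Theorem~4.1 in~\cite{Fried1973}; your added Paley--Wiener sketch of that result is accurate context but not required here. One small caveat: your claim that compactness of $\supp f$ inside $K_R$ ``promotes'' $|\tau|\le R$ to strict inequality is not correct in general, since $\supp f$ may well meet $\partial K_R$---the paper itself only establishes $\tau\in[-R,R]$ in its proof, so the strict inequality in the statement should be read as non-strict.
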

\begin{remark}
Note that assumption (1) in particular implies that the potential $v$ defined by the source supported in $K_R$ vanishes in $\{ (t,x) \in \R \times \R^3:\, | t-|x|/c_0\,|>R \}$. 
\end{remark}
\begin{proof}
If $g=(Vf)^\infty$ for a distribution $f$ supported in a subset of $K_R$, the right equality in~\eqref{eq:farField} shows that 
\[
  g(\tau,\hat{x})  
  = \int_{B_R} f(\tau+\hat{x}\cdot y/c_0, \, y) \d{y},
  \qquad  (\tau, \hat{x}) \in \R \times \S^2.  
\]  
As the far field at $(\tau,\hat{x})$ (formally) is an integral of $f$ over a characteristic hyperplane through the point $(\tau,0)$, and as the boundary of the set $K_R = \{ |t| + |x|/c_0 \leq R \}$ containing the support of $f$ is (everywhere) characteristic, $\tau$ must belong to $[-R, R]$ for that $(\tau,\hat{x})$ can belong to $\supp(g)$.
Condition (1) is hence satisfied, such that $g$ in particular has compact support. 
Further, condition (2) is due to
\begin{align}
  g_\chi(\hat{x}) 
  & = \langle g(\cdot, \hat{x}), \chi \rangle_{\D'(\R) \times \D(\R)}
   = \int_{\R} g(\tau, \hat{x}) \chi(\tau) \d{\tau} \nonumber \\
   & = \int_{\R} \int_{B_R} f(\tau+\hat{x}\cdot y/c_0, \, y) \d{y} \chi(\tau) \d{\tau} \nonumber \\
   & = \int_{\R} \int_{B_R} f(\tau, \, y) \d{y} \chi(\tau - \hat{x}\cdot y/c_0) \d{\tau} 
   \quad \text{for }\hat{x} \in \S^2, \, \chi \in \D(\R).   \label{eq:aux409}
\end{align}
(Strictly speaking, we would have to replace integrals by a suitable dual evaluation; further, the last equality merely holds in the distributional sense, i.e., after evaluation against a test function.)
As $g$ has compact support, the latter identity extends to all $\chi\in\E(\R)$ and in particular implies that $\langle g(\cdot,\hat{x}), \tau \mapsto \tau^\ell \rangle_{\E'(\R)\times\E(\R)} = \int_{\R} \int_{B_R} f(\tau, \, y) \d{y} (\tau + \hat{x}\cdot y/c_0)^\ell \d{\tau}$ is a polynomial in $\hat{x}$ of degree less than or equal to $\ell \in \N$. 

If $g$ satisfies conditions (1) and (2), then Theorem 4.1 in~\cite{Fried1973} shows that $g=Ff$ for some $f\in\E'(\R\times\R^3)$ supported in $K_R$.
\end{proof}

%
 

\begin{theorem}\label{th:shift}
  Assume that $g \in \D'(\R; \D'(\S^2))$ defines a continuous map $\chi \mapsto g_\chi =: \langle g(\cdot, \hat{x}), \chi \rangle_{\D'(\R) \times \D(\R)}$ from $\D(\R)$ into $\D(\S^2)$ and choose $z_0 \in \R^3$, $\tau_0 \in \R$, and $R \geq 0$. 
  Then $g$ is the far field of a volume potential with distributional source supported in 
  \begin{equation}\label{eq:defK}
    K_{R,\tau_0,z_0} := \{ (t,x) \in \R \times \R^3: \, |t-\tau_0| + |x-z_0| / c_0 \leq R \}
  \end{equation} 
  if and only if the shifted distribution $(\tau,\hat{x}) \mapsto g_{z_0,\tau_0}(\tau,\hat{x}) := g(\tau + \tau_0 - \hat{x}\cdot z_0/c_0,\hat{x})$ satisfies conditions (1) and (2) of Theorem~\ref{th:friedlander}. 
\end{theorem}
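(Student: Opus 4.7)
The plan is to reduce Theorem~\ref{th:shift} to Friedlander's theorem (Theorem~\ref{th:friedlander}) by translating the source in space-time. For any $f \in \E'(\R\times\R^3)$, define the translated distribution $\tilde f \in \E'(\R\times\R^3)$ by $\langle \tilde f, \phi\rangle = \langle f, \phi(\cdot-\tau_0, \cdot-z_0)\rangle$ for $\phi \in \E(\R\times\R^3)$. From the definitions~\eqref{eq:KR} and~\eqref{eq:defK} it is immediate that $\supp(f) \subset K_{R,\tau_0,z_0}$ if and only if $\supp(\tilde f) \subset K_R$.

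The key identity to prove is the shift formula $F\tilde f = g_{z_0,\tau_0}$ whenever $g = Ff$, where $g_{z_0,\tau_0}(\tau,\hat{x}) = g(\tau + \tau_0 - \hat{x}\cdot z_0/c_0, \hat{x})$ as in the statement. For a smooth integrable source this is immediate from~\eqref{eq:farField} by the substitution $y' = y + z_0$:
\[
  (F\tilde f)(\tau,\hat{x}) = \int_{\R^3} f(\tau + \tau_0 + \hat{x}\cdot(y'-z_0)/c_0, y')\,dy' = (Ff)(\tau + \tau_0 - \hat{x}\cdot z_0/c_0, \hat{x}).
\]
For the distributional identity I would test against $\chi(\tau)\psi(\hat{x})$ with $\chi\in\D(\R)$ and $\psi\in\D(\S^2)$, and use the explicit adjoint formula $(F^T g)(t,y) = \int_{\S^2} g(t - \hat{x}\cdot y/c_0, \hat{x})\,\mathrm{dS}(\hat{x})$ derived in the proof of the preceding lemma: pulling the translation of $\tilde f$ over to the test side and interchanging with the spherical integral reproduces exactly the composition with the $\hat{x}$-dependent shift appearing in $g_{z_0,\tau_0}$.

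With the shift identity in place the theorem follows by equivalence. If $\supp(f) \subset K_{R,\tau_0,z_0}$, then $\supp(\tilde f) \subset K_R$ and $F\tilde f = g_{z_0,\tau_0}$, so Theorem~\ref{th:friedlander} applied to $\tilde f$ forces conditions (1) and (2) to hold for $g_{z_0,\tau_0}$. Conversely, before invoking the sufficiency direction of Theorem~\ref{th:friedlander} on $g_{z_0,\tau_0}$, I would verify that the hypothesis on $\chi \mapsto g_\chi$ as a continuous map $\D(\R) \to \D(\S^2)$ transfers to $g_{z_0,\tau_0}$: pairing $g_{z_0,\tau_0}$ with $\chi$ amounts to pairing $g(\cdot,\hat{x})$ with $\tau \mapsto \chi(\tau - \tau_0 + \hat{x}\cdot z_0/c_0)$, whose supports remain in a common compact subset of $\R$ and which depends smoothly on $\hat{x}$, so the required continuity is inherited. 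Friedlander's theorem then yields some $\tilde f\in\E'(\R\times\R^3)$ with $\supp(\tilde f) \subset K_R$ and $F\tilde f = g_{z_0,\tau_0}$; undoing the translation produces $f \in \E'(\R\times\R^3)$ with $\supp(f) \subset K_{R,\tau_0,z_0}$ and $Ff = g$.

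The main obstacle is the rigorous extension of the shift identity $F\tilde f = g_{z_0,\tau_0}$ to distributional sources, since the time shift $\tau_0 - \hat{x}\cdot z_0/c_0$ depends on $\hat{x}$ and so cannot be carried out purely in the time variable. Once this adjoint calculation is settled, the theorem is a direct translation of Theorem~\ref{th:friedlander} from the bi-cone $K_R$ to the shifted bi-cone $K_{R,\tau_0,z_0}$.
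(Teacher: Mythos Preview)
Your proposal is correct and follows essentially the same route as the paper: translate the source by $(\tau_0,z_0)$, show that this corresponds on the far-field side to the shift $g\mapsto g_{z_0,\tau_0}$, check that the continuity hypothesis transfers, and then invoke Theorem~\ref{th:friedlander} in both directions. The paper carries out the shift identity by writing out the integral in~\eqref{eq:farField} and substituting, whereas you frame it via the adjoint $F^T$; these are the same computation.
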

\begin{proof}
As $\chi \mapsto \langle v^\infty(\cdot, \hat{x}), \chi \rangle_{\D'(\R) \times \D(\R)}$ is continuous from $\D(\R)$ into $\D(\S^2)$, the definition of $g_{z_0,\tau_0}$ implies that the analogous map for that distribution is continuous as well. 
If $(\tau,\hat{x}) \mapsto g_{z_0,\tau_0}(\tau,\hat{x})$ satisfies conditions (1) and (2) of Theorem~\ref{th:friedlander}, then by that result there is $f_{z_0,\tau_0} \in \E'(\R \times \R^3)$ supported in $K_{R}$ such that $g_{z_0,\tau_0} = F f_{z_0,\tau_0}$, that is, $g_{z_0,\tau_0} = v_{z_0,\tau_0}^\infty$ for $v_{z_0,\tau_0} = V f_{z_0,\tau_0}$.  
Further, 
\begin{align}
  g(\tau,\hat{x}) 
  & = g_{z_0,\tau_0}(\tau - \tau_0 + \hat{x}\cdot z_0/c_0,\hat{x}) 
  = v_{z_0,\tau_0}^\infty(\tau - \tau_0 + \hat{x}\cdot z_0/c_0,\hat{x}) \nonumber \\
  & = \int_{B_R} f_{z_0,\tau_0}(\tau - \tau_0 + \hat{x}\cdot (y+z_0)/c_0, \, y) \d{y} \label{eq:aux480}\\
  & = \int_{|y-z_0|<R} f_{z_0,\tau_0}(\tau - \tau_0 +\hat{x}\cdot y/c_0, \, y-z_0) \d{y} \quad \text{for } (\tau,\hat{x}) \in \R\times\S^2. \nonumber
\end{align}
If we set $f(\tau,y) := f_{z_0,\tau_0}(\tau - \tau_0 + \hat{x}\cdot y/c_0, \, y-z_0)$ then the support of $f$ equals the support of $f_{z_0,\tau_0}$ shifted by $\tau_0$ in time and by $z_0$ in space. 
As $\supp(f_{z_0,\tau_0}) \subset K_R$ we conclude that $\supp(f) \subset K_{R,\tau_0,z_0}$. 
Further,~\eqref{eq:aux480} shows that $g = F f$ holds in $\D'(\R; \D'(\S^2))$.
The converse direction follows as in the proof of Theorem~\ref{th:friedlander}. 
\end{proof}

If $g \in \D'(\R; \D'(\S^2))$ decomposes into several distributions $g_1,\dots, g_M \in \D'(\R; \D'(\S^2))$ with pairwise disjoint support, the latter result can be somewhat refined.

\begin{theorem}\label{th:multiShift}
  Assume that $g \in \D'(\R; \D'(\S^2))$ defines a continuous map $\chi \mapsto g_\chi =: \langle g(\cdot, \hat{x}), \chi \rangle_{\D'(\R) \times \D(\R)}$ from $\D(\R)$ into $\D(\S^2)$.
  Further, assume that there are numbers $\tau_1 < \tau_2 < \dots < \tau_M$ and $R_1, \dots, R_M \in \R_{\geq 0}$ such that 
\begin{equation}\label{eq:WSCC}
  \supp(g) \subset \bigcup_{m=1}^M \bigg( \tau_m - \frac{R_m}{c_0}, \, \tau_m + \frac{R_m}{c_0} \bigg) \times \S^2
  \quad \text{and} \quad 
  \tau_{m+1} - \tau_m > \frac{1}{c_0}\big(R_{m+1} + R_m \big)
\end{equation}
and define $g_m \in \D'(\R; \D'(\S^2))$ by $g_j = g \, \mathbbm{1}_{(\tau_m - R_m/c_0, \, \tau_m + R_m/c_0) \times \S^2}$ for $m=1,\dots,M$.\\[1mm]
(a) Then $g$ is the far field of a retarded volume potential for some distributional source supported in a subset of  $\bigcup_{m=1}^M K_{R_m,\tau_m,0}$, see~\eqref{eq:defK}.\\[1mm]
(b) Assume that there are pairs $(\tau_m', z_m') \in K_{R_m,\tau_m,0}$ and numbers $R_m' \in [0,R_m)$ with $K_{R_m',\tau_m',z_m'} \subset K_{R_m,\tau_m,0}$ such that $(\tau,\hat{x}) \mapsto g_m(\tau + \tau_m' - \hat{x}\cdot z_m'/c_0,\hat{x})$ is supported in $[R_m',R_m']$.  
Then $g$ is the far field of a retarded volume potential for some distributional source supported in a subset of $\bigcup_{m=1}^M K_{R_m',\tau_m',z_m'}$.
\end{theorem}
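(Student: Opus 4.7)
The plan is to reduce both parts to Theorem~\ref{th:shift} via a temporal decomposition of $g$ into the pieces $g_m$, and then to recombine the resulting sources using linearity of $F$.

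Step one is to make the decomposition precise. The separation hypothesis $\tau_{m+1}-\tau_m > (R_{m+1}+R_m)/c_0$ ensures that the intervals $I_m := (\tau_m - R_m/c_0,\, \tau_m + R_m/c_0)$ are pairwise disjoint with positive gaps between them. This allows me to choose a partition $\{\phi_m\}\subset\D(\R)$ with $\phi_m \equiv 1$ on $I_m$, $\phi_m \equiv 0$ on $I_k$ for $k\ne m$, and $\sum_m\phi_m \equiv 1$ on $\bigcup_m I_m$. Because $\supp(g)$ lies inside $\bigcup_m I_m\times\S^2$, the distributions $g_m$ defined through $\langle g_m,\chi\psi\rangle := \langle g,(\phi_m\chi)\psi\rangle$ for $\chi\in\D(\R)$ and $\psi\in\D(\S^2)$ satisfy $g = \sum_{m=1}^M g_m$ in $\D'(\R;\D'(\S^2))$ and coincide distributionally with the $\mathbbm{1}_{I_m\times\S^2}$-truncations in~\eqref{eq:WSCC}. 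Moreover, each $g_m$ inherits from $g$ the continuity $\chi\mapsto g_{m,\chi}:\D(\R)\to\D(\S^2)$, since $\phi_m\chi\in\D(\R)$ whenever $\chi\in\D(\R)$.

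Step two is to apply Theorem~\ref{th:shift} piecewise. For part~(a), I would use the centre $(\tau_0,z_0)=(\tau_m,0)$ with radius $R_m$: the shifted distribution $(\tau,\hat x)\mapsto g_m(\tau+\tau_m,\hat x)$ is supported in $(-R_m/c_0,R_m/c_0)\times\S^2$, which is precisely condition~(1) of Theorem~\ref{th:friedlander}. For part~(b), the shift is taken at $(\tau_m',z_m')$ with radius $R_m'$, and the standing assumption that $(\tau,\hat x)\mapsto g_m(\tau+\tau_m'-\hat x\cdot z_m'/c_0,\hat x)$ is supported in $[-R_m',R_m']\times\S^2$ gives condition~(1) for this shifted piece. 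Once Theorem~\ref{th:shift} applies, it produces $f_m\in\E'(\R\times\R^3)$ supported in $K_{R_m,\tau_m,0}$ (respectively $K_{R_m',\tau_m',z_m'}$) with $Ff_m = g_m$. Setting $f := \sum_m f_m$ gives an element of $\E'(\R\times\R^3)$ with support in the claimed union of biconic sets, and by linearity of $F$ one gets $Ff = \sum_m Ff_m = \sum_m g_m = g$.

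The main technical obstacle is verifying condition~(2) of Theorem~\ref{th:friedlander} --- the polynomial moment condition --- for each shifted piece $g_m$, since this global structural property of far fields does not obviously restrict to temporal slices. Under the implicit standing assumption that $g$ is itself a far field, the polynomial condition holds for $g$ globally, and the required transfer to each $g_m$ is precisely what forces the separation in~\eqref{eq:WSCC}: the shifted moment maps
\[
  \hat x \mapsto \big\langle g(\cdot,\hat x),\, \phi_m(\tau)(\tau-\tau_m)^\ell\big\rangle_{\D'(\R)\times\D(\R)},
  \qquad \ell\in\N,
\]
must turn out to be polynomials in $\hat x$ of degree at most $\ell$. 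I expect to establish this by expanding $(\tau-\tau_m)^\ell$ in powers of $\tau$ and exploiting the disjointness of the $I_k$ to argue that the $\phi_m$-cutoffs can be replaced without loss by globally smooth modifications whose moments against $g$ are governed by condition~(2) globally; the contributions from the other intervals $I_k$, $k\ne m$, vanish identically on $\supp(g)$, which decouples the moments across pieces. Carrying this localization out rigorously --- in a manner compatible with the weak-$*$ continuity of $F$ on $\E'(\R\times\R^3)$ used in Section~\ref{se:far} --- is the delicate step I would expect to dominate the proof.
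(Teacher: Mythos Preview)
Your strategy---cutoff decomposition of $g$ into the $g_m$, verification that each $g_m$ inherits the continuity property $\chi\mapsto (g_m)_\chi$, piecewise application of Theorems~\ref{th:friedlander} and~\ref{th:shift}, and recombination via linearity of $F$---is exactly what the paper does. The paper's argument is shorter only because it does \emph{not} address the point you flagged as the main obstacle: it simply writes ``Theorem~\ref{th:friedlander} applied to each $g_m$ now implies that $g_m=(Vf_m)^\infty$'' without checking condition~(2) (the polynomial moment condition) for the individual pieces $g_m$.

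Your caution here is warranted. As stated, the theorem does not assume $g\in\Rg(F)$, and without that the conclusion can fail already for $M=1$: take $g(\tau,\hat x)=h(\tau)k(\hat x)$ with $h\in\D(\R)$, $\int h\neq 0$, and $k\in C^\infty(\S^2)$ non-constant; all hypotheses of the theorem hold, yet the zeroth moment $\hat x\mapsto(\int h)\,k(\hat x)$ is not a degree-$0$ polynomial, so $g\notin\Rg(F)$ by Theorem~\ref{th:friedlander}. Even granting your ``implicit standing assumption'' that $g$ itself satisfies condition~(2), the localization you sketch does not go through as written: knowing that $\sum_{m}\langle g_m(\cdot,\hat x),\tau^\ell\rangle$ is a polynomial of degree $\leq\ell$ in $\hat x$ gives no algebraic control over the individual summands, and temporal disjointness of the $I_m$ does not help, since the moment condition involves integration over all of $\R$. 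So the obstacle you identified is genuine; neither your sketch nor the paper's proof closes it, and you should regard the verification of condition~(2) for each $g_m$ as a real gap rather than a routine step.
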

\begin{remark}
By assumption~\eqref{eq:WSCC}, the sets $K_{R_m,\tau_m,0}$ from (a) are pairwise disjoint.
\end{remark}
\begin{proof}
Let us first note that~\eqref{eq:WSCC} implies that the support of $g$ decomposes into $M$ disjoint parts included in the open sets $( \tau_m - R_m/c_0, \, \tau_m + R_m/c_0 \big) \times \S^2$. 
Consequently, each $g_m$ is indeed a distribution with compact support and the sum of these elements of $\E'(\R; \D'(\S^2))$ equals $g$.
 
As the pairwise intersections of the supports of $g_1, \dots, g_M$ are empty, there is for all $m=1,\dots,M$ a cut-off function $\zeta_m \in \D(\R)$ such that $\zeta_m \equiv 1$ on $[ \tau_m - R_m/c_0, \, \tau_m + R_m/c_0]$ and $\zeta_m \equiv 0$ on $\bigcup_{j\not=m} \big[ \tau_j - R_j/c_0, \, \tau_j + R_j/c_0 \big]$. 
Thus, $g_m(\chi) = g_m(\chi \, \zeta_m) = g(\chi \, \zeta_m)$ for all $\chi \in \D(\R)$ and the continuity of $\chi \mapsto g_\chi$ from $\D(\R)$ into $\D(\S^2)$, together with the product rule, shows that $\chi \mapsto g_m(\chi)$ is continuous from $\D(\R)$ into $\D(\S^2)$ as well. 

Theorem~\ref{th:friedlander} applied to each $g_m$ now implies that $g_m = (Vf_m)^\infty$ for some $f_m \in \D'(\R\times \R^3)$ supported in $K_{R_m,\tau_m,0}$. 
This shows part (a), because $g=v^\infty$ for $v = \sum_{m=1}^M V f_m$. 

To prove part (b), assume that for all $m=1,\dots,M$ there is a point $(\tau_m',z_m') \in K_{R_m,\tau_m,0}$ and a radius $R_m'$ with $K_{R_m',\tau_m',z_m'} \subset K_{R_m,\tau_m,0}$ such that additionally the support of $(\tau,\hat{x}) \mapsto g_m(\tau + \tau_m' - \hat{x}\cdot z_m/c_0,\hat{x})$ is supported in $[\tau_m'-R_m/c_0,\tau_m'+R_m/c_0]\times \S^2$. 
Theorem~\ref{th:shift} then implies that $g_m$ is far field of some retarded volume potential supported in $K_{R_m',\tau_m',z_m'}$, which shows the claim of (b). 
\end{proof}

\begin{example}
We illustrate Theorems~\ref{th:friedlander},~\ref{th:shift}, and~\ref{th:multiShift} by examples for point sources.\\[1mm]
(a)
The Dirac distribution $\delta_{(\tau_0,z_0)}$ at $(\tau_0,z_0) \in \R \times \R^3$ is a source that radiates the field $v(t,x) = \delta_0(t-\tau_0-|x-z_0|/c_0)/|x-y|$ where $\delta_0$ is the scalar Dirac distribution at the origin. 
By~\eqref{eq:farField}, the far field of $v$ equals $v^\infty(\tau,\hat{x}) = \delta_0(\tau -\tau_0 + \hat{x} \cdot z_0/c_0)$
and is supported in $[\tau_0 - |z_0|/c_0, \tau_0 + |z_0|/c_0] \times \S^2$. 
Integrating $v^\infty$ against $\tau \mapsto \tau^\ell$ for $\ell\in\N$ shows that 
\[
  \hat{x} \mapsto \int_\R \delta_0(\tau -\tau_0 + \hat{x} \cdot z_0/c_0) \tau^\ell \d{\tau} = (\tau_0 - \hat{x} \cdot z_0/c_0)^\ell, \qquad \hat{x}\in\S^2,
\]
further is a polynomial of degree at most $\ell$. 
Theorem~\ref{th:friedlander} hence states that $K_{\max |\tau_0 \pm |z_0|\,| } = \{ |t - |x|/c_0 \,| \leq \max(|\tau_0 \pm |z_0|\,|) \}$ contains a source radiating $v^\infty$. 
Shifting the far field in $\tau_0$ and $z_0$ in order to reduce the volume of the supporting set, by Theorem~\ref{th:shift} we find that $K_{0,\tau_0,z_0} = \{ (\tau_0,z_0) \}$ supports $v^\infty$, since the shifted far field $v^\infty_{z_0,\tau_0}(\tau,\hat{x}) := v^\infty(\tau + \tau_0 - \hat{x} \cdot z_0/c_0,\hat{x}) = \delta_0(\tau)$ has support $\{ 0 \} \times \S^2$.\\[1mm] 
(b) 
For two different source points $(\tau_{1,2},z_{1,2}) \in \R \times \R^3$, the source $f= \delta_{(\tau_1,z_1)} + \delta_{(\tau_2,z_2)}$ radiates the far field $v^\infty(\tau,\hat{x}) = [\delta(\tau -\tau_1 + \hat{x} \cdot z_1/c_0)+\delta(\tau -\tau_2 + \hat{x} \cdot z_2/c_0)]$ supported in $\bigcup_{j=1,2} [\tau_j - |z_j|/c_0, \tau_j + |z_j|/c_0] \times \S^2$. 
Theorem~\ref{th:friedlander} hence shows that $K_{\max_{j=1,2}|\tau_j \pm |z_j|\,|}$ contains a source that radiates $v^\infty$.

Theorem~\ref{th:shift} allows to reduce this set to any $K_{R,\tau,z}$ that contains both points $(\tau_{1,2},z_{1,2})$ in its boundary (which is satisfied, e.\,g., by choosing $z=z_1$, $\tau=1/2 \, (\tau_1+\tau_2+|z_2|-|z_1|)$, and $R = c_0/2 (\tau_2-\tau_1+|z_2|-|z_1|)$).

Finally, $v^\infty$ can be split into two far fields via condition~\eqref{eq:WSCC} if either $\max [\tau_1 \pm |z_j|/c_0] < \min[\tau_2 \pm |z_2|/c_0]$ or $\max [\tau_2 \pm |z_2|/c_0] < \min[\tau_1 \pm |z_1|/c_0]$. 
Under this condition, we can hence split $v^\infty = v^\infty_1 + v^\infty_2$ with $v^\infty_{1,2}(\tau,\hat{x}) = \delta(\tau -\tau_{1,2} + \hat{x} \cdot z_{1,2}/c_0)$. 
Theorem~\ref{th:multiShift}(a) then states that $K_{|z_1|,\tau_1,0} \cup K_{|z_2|,\tau_2,0}$ supports $v^\infty$. 
Theorem~\ref{th:multiShift}(b) next considers the supports of 
\[
  (\tau,\hat{x}) \mapsto v^\infty_{1,2}(\tau + \tau' - \hat{x}\cdot z'/c_0,\hat{x})
  = \delta_0(\tau + (\tau'-\tau_{1,2}) + \hat{x}\cdot (z_{1,2}-z')/c_0),
\]
which reduce to the minimal set $\{ 0\} \times \S^2$ if and only if $\tau'=\tau_{1,2}$ and $z' = z_{1,2}$. 
As $(\tau_{1,2},z_{1,2}) \in K_{|z_{1,2}|,\tau_{1,2},0}$, Theorem~\ref{th:multiShift}(b) allows to conclude that $\{ (\tau_1,z_1), (\tau_2,z_2) \}$ is a set that supports a source generating $v^\infty$, which obviously is an optimal result. 
\end{example}

\subsection{Numerical experiments}
The last Theorem~\ref{th:multiShift} can be exploited algorithmically to determine small conical sets supporting sources for given far field data. 
In the sequel, we call this algorithm the conical support algorithm (CSA):
If a far field $g: \, \R \times \S^2$ decomposes into $M$ disjoint parts $g_m$, then CSA proceeds in two steps: 
\begin{itemize} 
  \item[(I)] Determine bounds $K_{R_m,\tau_m,0}$ for the support of the individual sources generating $g_m$ for $m=1,\dots,M$ according to Theorem~\ref{th:friedlander}. 
  \item[(II)] For $m=1,\dots,M$ do: 
  \begin{itemize}
    \item[$\bullet$] Define a grid of test points $z'$ in $\{ |x| < R_m \}$.
    \item[$\bullet$] Set $R_m^\ast=R_m$, $\tau_m^\ast=\tau_m$, and $z_m^\ast = 0$. 
    \item[$\bullet$] For each $z'$ compute the smallest interval $[T^-_{m, z'},T^+_{m, z'}]$ such that the shifted far field $g_{m, z'}: \, (\tau,\hat{x}) \mapsto g_m(\tau - \hat{x} \cdot z', \hat{x})$ is supported in $[T^-_{m, z'}, \, T^+_{m, z'}] \times \S^2$.
    \item[$\bullet$] If $T^+_{m, z'} - T^-_{m, z'} < R_m^\ast$ set $R_m^\ast= T^+_{m, z'} - T^-_{m, z'}$, $\tau_m^\ast = (T^+_{m, z'} + T^-_{m, z'})/2$, and $z_m^\ast=z'$. 
  \end{itemize}
\end{itemize}
The $M$ triples $(R_m^\ast,\tau_m^\ast,z_m^\ast)$ returned by CSA define conical sets $K_{R_m^\ast,z_m^\ast,\tau_m^\ast}$ that contain the support of $M$ sources radiating the $M$ far fields $g_m$. 
Note that the numbers $T^+_{m, z'} - T^-_{m, z'}$ and $(T^+_{m, z'} + T^-_{m, z'})/2$ define the width and the center of the conical set $K_{T^+_{m, z'} - T^-_{m, z'}, (T^+_{m, z'} + T^-_{m, z'})/2, 0}$, respectively.
The condition $T^+_{m, z'} - T^-_{m, z'} < R_m^\ast$ in step (II) of the latter algorithm hence checks whether the latter conical set is smaller than the currently determined conical set containing the support of $g_m$.  
Of course, there is no theoretic guarantee that there is a unique smallest conical set with that property.

\begin{remark}
(a) Computing the numbers $T^\pm_{m, z'}$ in step (II) of CSA does not require the costly computation of the shifted far field $g_{m, z'}$ itself. 
Computing instead functions $T^\pm_m: \, \S^2 \to \R$ such that for each direction $\hat{x} \in \S^2$ the interval $[T^-_m(\hat{x}), \, T^+_m(\hat{x})]$ is the support of $g_m(\cdot, \hat{x}): \, \R \to \R$ allows to quickly compute $T^\pm_{m,z'}$ as  
\[
  T^+_{m,z'} = \sup_{\hat{x} \in \S^2} \big[ T^+_m(\hat{x}) + z' \cdot \hat{x} / c_0 \big] 
  \quad \text{and} \quad 
  T^-_{m,z'} = \inf_{\hat{x} \in \S^2} \big[ T^-_m(\hat{x}) + z' \cdot \hat{x} / c_0 \big]. 
\]
(b) The intersection $K_{R_m^\ast,\tau_m^\ast,z_m^\ast} \cap K_{R_m,\tau_m,0}$ of the two conical sets determined in steps (I) and (II) of CSA contains the support of $g_m$, which might provide a support bound that is a strict subset of $K_{R_m^\ast,\tau_m^\ast,z_m^\ast}$. 
\end{remark}

To illustrate feasibility and robustness of the CSA via two numerical examples, let us set $c_0 = 1$ and consider sources where the generated far field is explicitly computable. 
For points $p_1 =  (1.2, 0, 0)^\top$ and $p_2 = (-0.4, 0.4, -0.4)^\top$ and shifts $\tau_1 = -1.3$ and $\tau_2 = 2.5$, we set 
\begin{equation}\label{eq:f12}
  f_{1,2}(t,x) =
    e^{-8 (t-\tau_{1,2})^2} \, \delta_{x=p_{1,2}} 
    \qquad \text{if } (t-\tau_{1,2})^2 \leq 1 \text{ and } x \in \R^3,
\end{equation}
and $f_{1,2}(t,x) = 0$ else. 
By~\eqref{eq:farField}, these sources radiate causal waves with far fields 
\[
  v^\infty_{1,2}(\tau, \hat{x}) = 
    e^{-8 (\tau - \tau_{1,2} + \hat{x} \cdot p_{1,2})^2} 
    \qquad \text{if } (\tau - \tau_{1,2} + \hat{x} \cdot p_{1,2})^2 \leq 1 \text{ and } \hat{x} \in \S^2, 
\]
and $v^\infty_{1,2}(\tau, \hat{x}) = 0$ else. 
We further consider a time-dependent source point at position $s(t) = (2+0.3 \, \cos(2 \pi t / 4), \, 2+0.3 \, \sin(2 \pi t / 4), \, 0.3 \, \sin(2 \pi t / 4) \big)^\top$ for $0 < t < 4$ that is modeled by the source 
\begin{equation}\label{eq:f}
  f(t,x) =
    e^{-8 [ t / \sin(\pi t / 4)^{1/4} ]^2} \, \delta_{x=s(t)} 
    \qquad \text{if } 0 < t < 4 \text{ and } x \in \R^3, 
\end{equation}
and $f(t, x) = 0$ else. 
The causal wave generated by this source radiates the far field 
\[
  v^\infty(\tau, \hat{x}) =
    e^{-8 \big[ ( \tau + \hat{x} \cdot s(t) ) / \sin(\pi (\tau + \hat{x} \cdot s(t)) / 4)^{1/4} \big]^2} 
    \qquad  \text{if } 0 < \tau+\hat{x} \cdot s(t) < 4 \text{ and } \hat{x} \in \S^2,
\]
and $v^\infty(\tau, \hat{x}) = 0$ else. 
We store the evaluation of these far fields for 440 directions on the unit sphere and 1000 equally spaces time steps in the time interval $[-10, 10]$ in a far field matrix of size 440 times 1000. 
(The 440 directions on the sphere are midpoints of the quadrangles of a surface mesh of the sphere.) 

After evaluating the above expressions we add to each data matrix a multiple of a random matrix with uniformly distributed entries in $[-1,1]$ such that the relative noise level in either case equals 5 percent, measured in the spectral matrix norm. 
(We do not exploit knowledge of the noise level in the CSA!)
Figure~\ref{fig:1} shows the two noisy far field matrices for data $v^\infty_1 + v_2^\infty$ and $v^\infty$. 
\begin{figure}[h!!!t!!!!!b!!]
  \centering
\begin{tabular}{c c}
\hspace*{-1mm}\includegraphics[width=0.48\linewidth]{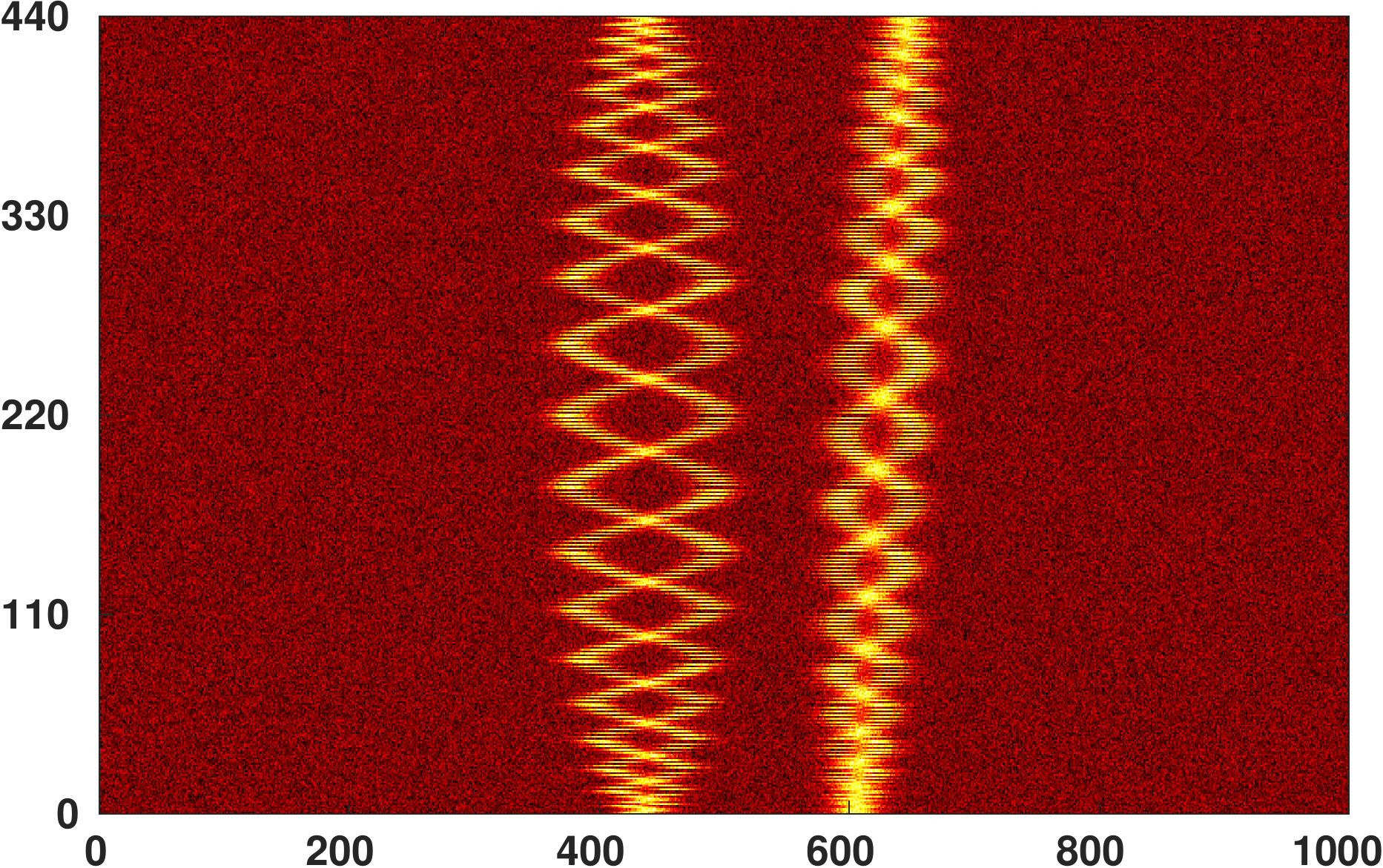}
&\includegraphics[width=0.48\linewidth]{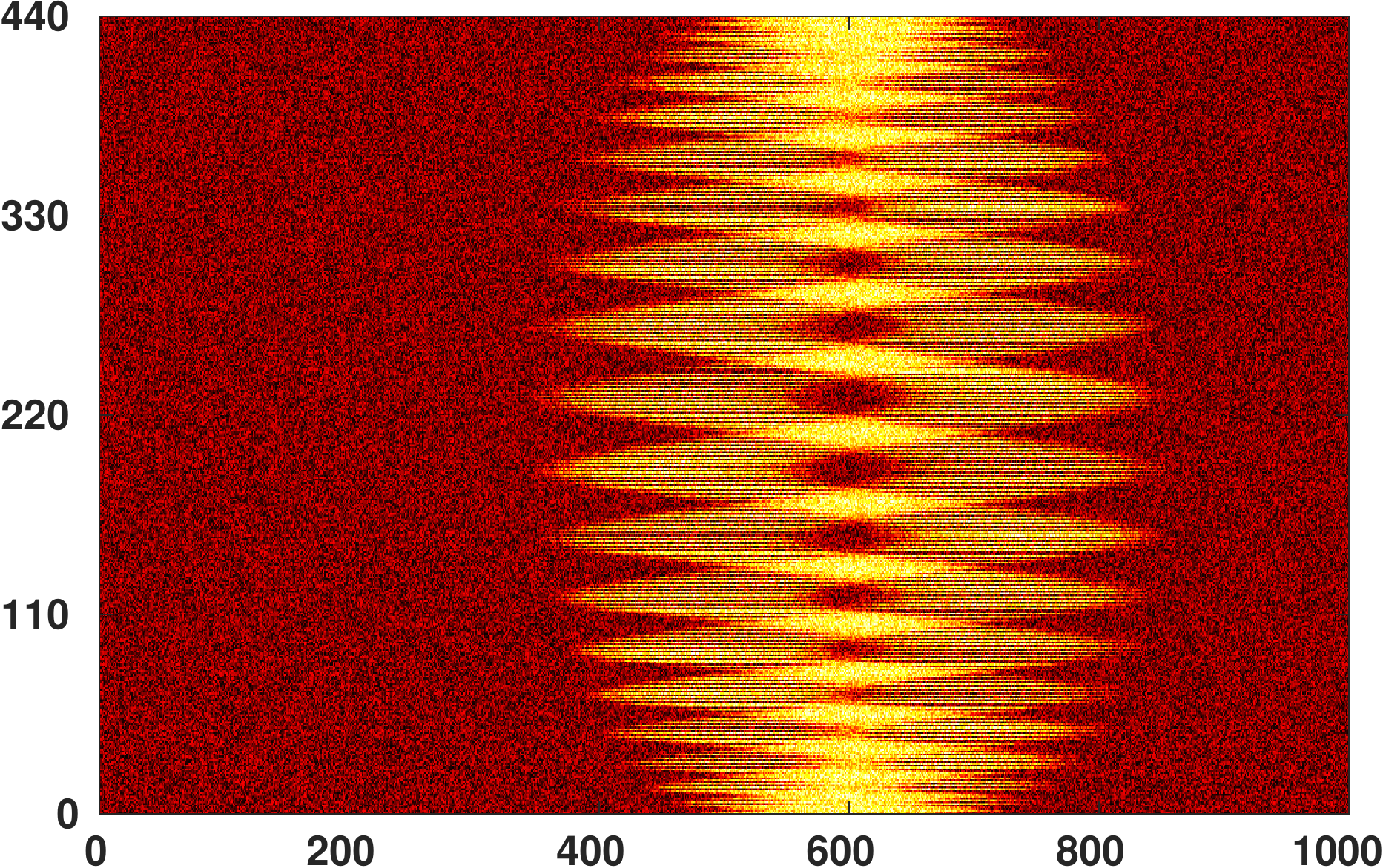}\\
(a) & (b) 
\end{tabular}
\caption{Noisy far field matrices for the far fields $v_1^\infty + v_2^\infty$ in (a) and $v^\infty$ in (b), for 1000 equally spaces points in time in [-10, 10] and $440$ directions on the unit sphere. 
The artificial noise level equals 0.05. }
\label{fig:1}
\end{figure}

Before starting the CSA, we estimate the maximal magnitude $A>0$ of noised zero entries in the data matrix for each of the 440 receiver positions individually. 
Precisely, for each sensor, we set $A$ to be 1.2 times the maximum of each of the first 20 (noisy) far field values (which, by assumption, should vanish for noise-free data). 
This estimate then serves to separate sources for different receivers by constructing connected components (constructed as index set) of the far field matrix: 
For each individual receiver, two simulated recorded values are considered as belonging to the same connected component if both are larger in magnitude than $A$ and if the time difference in between recording these two values is less than 0.08.
Further, it two receivers belong to adjacent quadrangles of the sphere's surface mesh that we use to define the receiver positions, and if two connected components for these two receivers share a common index, then these two individual connected components of the two receivers are considered as belonging to the same connected component of the entire data matrix. 
Partitioning the far field matrix into several connected components, i.e., index sets, can then be done by sequentially checking all ``adjacent'' receivers. 

When one applies this separation procedure to the first far field matrix containing evaluations of $v^\infty_1 + v^\infty_2$ generated via $f_1 + f_2$, one obtains two connected components corresponding to the two sources. 
Naturally, both of these components are inexact when compared to the support of $v^\infty_1 + v^\infty_2$ due to the added noise. 
Application of step (I) of CSA then computes that the first source $f_1$ is contained in $K_{R_1, \tau_1, 0}$ with $R_1 = 1.201$ and $\tau_1 = -1.291$.
Recall that the true source $f_1$ is a Dirac distribution at $(1.2, 0, 0)^\top$ shifted by $-1.3$ in time with a time-dependent profile given in~\eqref{eq:f12}.   
The support of $f_2$ is estimated by step (a) of CSA to be contained in $K_{R_2, \tau_2, 0}$ with $R_2 = 0.790$ and $\tau_2 = 2.492$; recall from~\eqref{eq:f12} that $f_2$ equals a Dirac distribution at $(-0.4, 0.4, -0.4)^\top$ shifted in time by $2.5$.
Note that we did not check numerically whether condition~\eqref{eq:WSCC} for the two separated far fields is satisfied. 

To refine these estimates, we define test points $z_{1,2}$ that cover $\{ |x| \leq R_{1,2} \}$ as $j/J \, R_{1,2} \hat{x}_m$ for $j=1, \dots, 51$, $J=34$, and the 440 directions $\hat{x}_m$ that we already used to simulate the far field matrices. 
(These values provided sufficiently accurate results.)
These test points are then used in step (II) of the CSA. 
For the (separated) source $f_1$ the algorithm computes that this source's support is included in $K_{R_1^\ast, \tau_1^\ast, z_1^\ast}$ with $R_1^\ast = 0.48$, $z_1^\ast = (1.18, -0.09, 0.10)^\top$, and $\tau_1^\ast =  -1.29$. 
For $f_2$, the resulting conical set $K_{R_2^\ast, \tau_2^\ast, z_2^\ast}$ including $\supp(f_2)$ is defined by  $R_2^\ast = 0.38$, $z_1^\ast = (-0.40, 0.40, -0.38)^\top$, and $\tau_2^\ast =  2.51$.
The computation time a naive implementation of CSA in MATLAB on a standard workstation without parallelization is for this example about 1.2 seconds. 
The quality of both results is reasonable, as the computed centers $(z_{1,2}^\ast, \tau_{1,2}^\ast)$ of the conical sets are less than 0.11 away from the true source points in the maximum norm, and the width of the conical sets corresponds roughly to the length of the time interval in which the time-profile $\exp(-8(t-\tau_{1,2})^2)$ of the sources $f_{1,2}$ is larger than the estimated maximal magnitude $A$ of the additive noise.

In the second numerical example we consider the perturbed far field $v^\infty$ corresponding to the source $f$ from~\eqref{eq:f}.
Figure~\ref{fig:1}(b) already indicates that this source cannot be separated into two disjoint sources (which is also due to our parameters when computing the connected components of the data matrix). 
The parameters for the conical set $K_{R_1,\tau_1,0}$ containing the support of $f$ computed in part (a) of CSA equal $R_1 = 2.73$ and $\tau_1 = 1.89$. 
The refined bound computed in part (b) is $K_{R_1^\ast,\tau_1^\ast,z_1^\ast}$ with parameters $R_1^\ast = 1.96$, $z_1^\ast = (2.04, 1.74, -0.24)^\top$, and $\tau_1^\ast = 2.01$. 
The computation time for this example is roughly 0.7 seconds. 
The quality of the resulting set is about the same as for the first example; e.g., the center $(z_1^\ast, \tau_1^\ast)$ of the resulting conical set is close to the mean value $((2,2,0)^\top, 2)$ of the moving source point $s(t)$. 
Further, step (I) of CSA returns a radius $R_1 = 2.73$ which is reduced in the second step to $R_1^\ast = 1.96$ by choosing the center $z_1^\ast$ of the cone $K_{R_1^\ast,\tau_1^\ast,z_1^\ast}$ approximately to the mean $((2,2,0)^\top, 2)$ of the source trajectory.

%
%
%
%
%
%

%
%


\providecommand{\noopsort}[1]{}

\end{document}